\theoremstyle{plain}
\newtheorem{thm}{Theorem}[section]
\newtheorem*{thm*}{Theorem}
\newtheorem*{cor*}{Corollary}
\newtheorem{prop}[thm]{Proposition}
\newtheorem{cor}[thm]{Corollary}
\newtheorem*{claim*}{Claim}
\theoremstyle{definition}
\newtheorem{strategy}[thm]{Main examples}
\newtheorem{ex}[thm]{Example}
\newtheorem{rem}[thm]{Remark}
\newtheorem{fact}[thm]{Fact}
\newtheorem{prob}[thm]{Problem}
\theoremstyle{remark}
\numberwithin{equation}{thm}
\def\Proj{\mathrm{Proj}}
\def\mod{\mathrm{mod}}
\def\Ker{\mathrm{Ker}}
\def\a{\mathfrak a}
\def\c{\mathfrak c}
\def\e{\mathrm{e}}
\def\m{\mathfrak m}
\def\n{\mathfrak n}
\def\p{\mathfrak p}
\def\q{\mathfrak q}
\def\H{\mathrm{H}}
\def\cl{\overline}
\newcommand{\rma}{\mathrm{a}}
\newcommand{\rmb}{\mathrm{b}}
\newcommand{\rmc}{\mathrm{c}}
\newcommand{\rme}{\mathrm{e}}
\newcommand{\rmH}{\mathrm{H}}
\newcommand{\rmS}{\mathrm{S}}
\newcommand{\rmU}{\mathrm{U}}
\newcommand{\calR}{\mathcal{R}}
\newcommand{\fkm}{\mathfrak{m}}
\newcommand{\fkq}{\mathfrak{q}}
\def\depth{\mathrm{depth}}
\def\Ass{\mathrm{Ass}}
\def\gr{\mathrm{gr}}
\def\M{{\mathcal M}}
\begin{document}

\setlength{\baselineskip}{20pt}
\title{Variation of the first Hilbert coefficients of parameters  with  a common integral closure}
\pagestyle{plain}
\author{L. Ghezzi}
\address{Department of Mathematics, New York City College of Technology-Cuny, 300 Jay Street, Brooklyn, NY 11201, U. S. A.}
\email{lghezzi@citytech.cuny.edu}
\author{S. Goto}
\address{Department of Mathematics, School of Science and Technology, Meiji University, 1-1-1 Higashi-mita, Tama-ku, Kawasaki 214-8571, Japan}
\email{goto@math.meiji.ac.jp}
\author{J. Hong}
\address{Department of Mathematics, Southern Connecticut State University, 501 Crescent Street, New Haven, CT 06515-1533, U. S. A.}
\email{hongj2@southernct.edu}
\author{K. Ozeki}
\address{Department of Mathematics, School of Science and Technology, Meiji University, 1-1-1 Higashi-mita, Tama-ku, Kawasaki 214-8571, Japan}
\email{kozeki@math.meiji.ac.jp}
\author{T.T. Phuong}
\address{Department of Information Technology and Applied Mathematics,
Ton Duc Thang University, 98 Ngo Tat To Street, Ward 19, Binh Thanh District,
Ho Chi Minh City, Vietnam}
\email{sugarphuong@gmail.com}
\author{W.V. Vasconcelos}
\address{Department of Mathematics, Rutgers University, 110 Frelinghuysen Rd, Piscataway, NJ 08854-8019, U. S. A.}
\email{vasconce@math.rutgers.edu}

\thanks{{AMS 2010 {\em Mathematics Subject Classification:}
Primary 13H15, Secondary 13H10.}\\The first author is partially supported by a grant from the City University of New York PSC-CUNY Research Award Program-41. The second author is partially supported by Grant-in-Aid for Scientific Researches (C) in Japan (19540054).
The fourth author is supported by a grant from MIMS (Meiji Institute for Advanced Study of Mathematical Sciences).
The fifth author is supported by JSPS Ronpaku (Dissertation of PhD) Program.
The last author is partially supported by the NSF}
\thanks{{\it Key words and phrases:}
Buchsbaum local ring, generalized Cohen-Macaulay ring, associated graded ring, Rees algebra, Sally module, Hilbert function, first Hilbert coefficient}

\begin{abstract}
A problem posed by Wolmer V. Vasconcelos \cite{V2} on the variation of the
first Hilbert coefficients of parameter ideals  with a common  integral closure in a local ring  is studied. Affirmative answers are given and counterexamples are explored as well.
\end{abstract}

\maketitle


\section{Introduction}
The purpose of this paper is to study a problem posed by the last author \cite{V2} on the variation, in a given local ring,  of the first Hilbert coefficients of parameter ideals  with a common  integral closure. To state the problem and the results as well, first of all let us fix our notation and terminology.

Let $A$ be a Noetherian local ring with  maximal ideal $\m$ and $d = \operatorname{dim} A > 0$. Let $\ell_A(M)$ denote, for an $A$-module $M$, the length of $M$. Then for each $\m$--primary ideal $I$ in $A$ and for each finitely generated $A$--module $M $ with $s = \dim_AM  \ge0$, we have integers $\{\e_I^i(M)\}_{0 \le i \le s}$ such that the equality $$\ell_A(M/I^{n+1}M)={\e}_I^0(M)\binom{n+s}{s}-\e_I^1(M)\binom{n+s-1}{s-1}+\cdots+(-1)^s{\e}_I^s(M)$$
holds true for all integers $n \gg 0$, which we call the Hilbert coefficients of $M$ with respect to $I$. The leading coefficient $\e_I^0(M)$ is called the multiplicity of $M$ with respect to $I$ and plays a very  important role in the analysis of  singularity  in $M$. In this paper we are mainly interested in the case where $M = A$ and $I = Q$ is a parameter ideal in $A$, that is an ideal generated by a system of parameters in $A$. Let $\cl{\a}$ denote, for an ideal $\a$ in $A$,  the integral closure of $\a$.

With this notation the last author  \cite{V2} proved that for every $\m$--primary ideal $I$ in $A$ {\it the set}
$$\Lambda (I) = \{\e_Q^1(A) \mid Q ~\text{is~a~parameter~ideal~in}~ A~\text{and}~ \cl{Q} = \cl{I}\}$$ 
{\it is} $finite$. Added to it, he raised the following, which is the main target of the present research.

\begin{prob}[\cite{V2}]\label{ques}
Is $\sharp \Lambda (I) = 1$ for every $\m$-primary ideal $I$ in  $A$?
\end{prob}

This problem has led the authors to the researches \cite{GhGHOPV1, GhGHOPV2, GO1, GO2, GO3}  on  the finiteness of the set $$\Lambda_1(A) = \{\e_Q^1(A) \mid Q ~\text{is~a~parameter~ideal~in}~ A~\}.$$ Among many results they proved that  $A$ is a generalized Cohen-Macaulay (resp. Buchsbaum)  ring if and only if  $\Lambda_1(A)$ is a finite set (resp. $\sharp \Lambda_1(A) = 1$), provided $A$ is unmixed, that is $\operatorname{dim} \widehat{A}/\p = d$ for every $\p \in \Ass \widehat{A}$, where $\widehat{A}$ denotes the $\m$--adic completion of $A$. We  now come back to the starting point, that is  Problem \ref{ques}, of our researches and are going to settle  it.

Let us now state our main results, explaining how this paper is organized.

Because the value $\e_Q^1(A) = -1$ is the greatest one among possible values of $\e_Q^1(A)$ for parameter ideals $Q$ in  non-Cohen-Macaulay unmixed local rings $A$ (\cite[Theorem 2.1]{GhGHOPV1}), the condition that $\e_Q^1(A) = -1$ for some parameter ideal $Q$ is a rather strong restriction. First of all, in Section 3 we shall study the structure of certain local rings $A$ which contain parameter ideals $Q$ with $\e_Q^1(A) = -1$ (Theorem \ref{1.1}). We actually have $\Lambda (\m) = \{-1\}$ for the local rings $(A, \m)$ explored in Theorem \ref{1.1}, which gives an affirmative answer to Problem \ref{ques} in the special case.

For each parameter ideal $Q$ in $A$ let $$\calR (Q) = A[Qt] \subseteq A[t] \ \ \text{and}\ \ \gr_Q(A) = \calR (Q)/Q\calR (Q)$$ (here $t$ denotes  an indeterminate over $A$), which we call the Rees algebra and the associated graded ring of $Q$ respectively. In Section 4 we  shall study some affirmative cases for Problem  \ref{ques}. In particular, we will prove the following, exploring an example satisfying  condition $(\sharp)$ addressed in it.

\begin{thm}\label{1.2} Assume that the residue class field $A/\m$ of $A$ is infinite. 
Let $I$ be an $m$-primary ideal in $A$ and assume that $(\sharp)$ for every minimal reduction $Q$ of $I$ the scheme $\operatorname{Proj} \calR (Q)$ is locally Cohen-Macaulay. Then $\e_Q^1(A)$ is constant and independent of the choice of minimal reductions $Q$ of $I$.
\end{thm}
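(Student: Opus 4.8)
The plan is to compute $\e_Q^1(A)$ via the Sally module of $Q$ with respect to $I$ and to show that, under hypothesis $(\sharp)$, this module has a length that does not depend on $Q$. Recall that for a minimal reduction $Q$ of $I$ one has the exact sequence of graded $\calR(Q)$-modules $0 \to I\calR(Q) \to I\calR(I) \to S_Q(I) \to 0$, where $S_Q(I)$ is the Sally module. Combining this with the standard relation between the Hilbert coefficients of $I$ and those of $\gr_I(A)$, one reduces the computation of $\e_Q^1(A)$ to controlling the graded pieces of $S_Q(I)$ and the local cohomology (equivalently, the Hilbert function) of $\gr_I(A)$; the key point is that the terms coming from $I$ alone are manifestly independent of $Q$, so everything hinges on the $Q$-dependent contribution.

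The first step is to translate $(\sharp)$ into a statement about depth. If $\operatorname{Proj}\calR(Q)$ is locally Cohen-Macaulay, then for every homogeneous prime $\p$ of $\calR(Q)$ not containing $\calR(Q)_+$ the local ring $\calR(Q)_\p$ is Cohen-Macaulay; equivalently, the local cohomology modules $\H^i_{\calR(Q)_+}(\calR(Q))$ are finitely graded (concentrated in finitely many degrees) for $i < d+1$, and by a now-standard argument this forces $\gr_Q(A)$ to be a generalized Cohen-Macaulay ring, indeed with uniformly bounded lengths of the relevant local cohomology. Next, I would exploit that $A/\m$ is infinite to choose, for the fixed ideal $I$, a common general minimal reduction behaviour: any two minimal reductions $Q$, $Q'$ of $I$ have $\cl Q = \cl{Q'} = \cl I$, and by \cite[Theorem 2.1]{GhGHOPV1} the values $\e_Q^1(A)$ lie in a bounded range; one wants to pin down a formula $\e_Q^1(A) = \e_I^1(A) + (\text{something depending only on }S_Q(I))$ and show the corrective term is forced to a single value. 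The natural mechanism is: the graded local cohomology of $\gr_Q(A)$ governs the difference between $\e_Q^1(A)$ and the "expected" value, and local Cohen-Macaulayness of $\operatorname{Proj}\calR(Q)$ makes that local cohomology vanish in the range that affects $\e^1$ — concretely, $\H^i_{\calR(Q)_+}(\gr_Q(A))$ has finite length for $i\le d-1$ and its contribution to $\e_Q^1$ telescopes to an invariant of $\cl I$, not of $Q$.

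The main obstacle I anticipate is precisely the last implication: showing that the contribution to $\e_Q^1(A)$ from the $Q$-dependent local cohomology is actually the \emph{same} number for all $Q$, not merely that it is bounded or finite. Finiteness of $\Lambda(I)$ is already known from \cite{V2}; upgrading finiteness to a constant requires an honest identity. I would try to obtain this by comparing two minimal reductions $Q, Q'$ directly: since $Q + Q' \subseteq \cl I$ and both are reductions of $I$, one can pass to the ring $\calR(I)$ and use that $S_Q(I)$ and $S_{Q'}(I)$ are both quotients of $I\calR(I)$ by the respective sub-Rees-algebras, so their Hilbert polynomials differ only through terms measured by $\H^\bullet_{\m}$ of $A$ and the (common) module $I\calR(I)$. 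Under $(\sharp)$ the "error terms" vanish, leaving $\e_Q^1(A) = \e_{Q'}^1(A)$. A secondary technical point to handle carefully is the reduction to the complete or to the $A/\m$-infinite setting and ensuring the Sally-module machinery applies even though $A$ need not be Cohen-Macaulay; here one leans on the generalized-Cohen-Macaulay conclusion extracted in the first step, which is exactly what keeps all the relevant lengths finite and lets the telescoping argument go through.
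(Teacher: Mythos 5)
There is a genuine gap, and you have in fact pointed at it yourself: the entire content of the theorem is the step you defer to the end, namely that the $Q$-dependent correction term is literally the \emph{same number} for every minimal reduction, and your proposal never supplies a mechanism for this. Reformulating $\e_Q^1(A)$ via the Sally module gives (under the conditions of \cite[Proposition 2.5]{GO1}) the identity $\e_Q^1(A) = \e_I^1(A) - \e_I^0(A) + \ell_A(A/I) - \ell_{T_\p}(S_\p)$ with $T = \calR(Q)$, $\p = \m T$, so your plan reduces the theorem to the assertion that the multiplicity $\ell_{T_\p}(S_\p)$ of the Sally module is independent of $Q$. That is not progress: it is an equivalent restatement, and the paper's own Examples in Sections 5 and 6 show that this multiplicity genuinely varies with the minimal reduction when $(\sharp)$ fails, so one must still use $(\sharp)$ in an essential and quantitative way. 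Your proposed mechanism --- that local Cohen--Macaulayness of $\operatorname{Proj}\calR(Q)$ makes the relevant local cohomology ``vanish in the range that affects $\e^1$'' --- is not correct: condition $(\sharp)$ only forces the modules $\rmH^i_{\calR_+}(\gr_Q(A))$ to be finitely graded (equivalently, $\gr_Q(A)$ to be a graded generalized Cohen--Macaulay ring), not to vanish, and the alternating sums of lengths of their graded pieces still depend on $Q$ a priori. No honest identity is ever produced.

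For contrast, the paper's proof is an induction on $d = \dim A$ that sidesteps Sally modules entirely. The base case $d=1$ is $\e_Q^1(A) = -\ell_A(\rmH_\m^0(A))$. For $d>1$, the decisive consequence of $(\sharp)$ (Proposition 4.5 in the paper) is that \emph{every} system of generators of a minimal reduction $Q$ forms a superficial sequence for $A$ with respect to $Q$. Using the infinite residue field, two reductions $Q$ and $Q'$ are linked by a chain $Q, Q_x, Q'_x, Q'$ of reductions of $I$ so that consecutive ones share a generator $a$; passing to $B = A/(a)$ one has $\e_Q^1(A) = \e_{QB}^1(B)$ (plus $\ell_A((0):_A a)$ when $d=2$, a term independent of the reduction), and the remaining work is to verify that $(\sharp)$ descends to the ideal $IB$ in $B$, which is done by showing the kernel of $\gr_Q(A)/at\cdot\gr_Q(A) \to \gr_{QB}(B)$ is finitely graded. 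If you want to salvage your approach, you would need to prove directly that $(\sharp)$ forces $\ell_{T_\p}(S_\p)$ to be constant in $Q$; the superficiality statement above is the only handle the hypothesis gives you, and it naturally leads back to the dimension-reduction argument rather than to a computation inside $\calR(I)$.
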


In Section 5 we shall study the case where the answer is negative. To state the result, let us briefly recall the notion of Buchsbaum ring and that of generalized Cohen-Macaulay ring. We say that $A$ is a {\it Buchsbaum} ring, if the difference $\ell_A(A/Q) - \rme_Q^0(A)$ is constant and independent of the choice of parameter ideals $Q$ in $A$ (\cite{SV1}).  We say that $A$ is a {\it generalized Cohen-Macaulay} ring if 
$$
\sup_Q[\ell_A(A/Q) - \rme_Q^0(A)] < \infty
$$
(\cite{STC}), where $Q$ runs over parameter ideals in $A$. This condition is equivalent to saying that the local cohomology modules $\rmH_\fkm^i(A)$ of $A$ with respect to $\m$ are finitely generated  for all $i \ne d$. When this is the case, we have the equality 
$$
\sup_Q [\ell_A(A/Q) - \rme_Q^0(A)] = \sum_{j=0}^{d-1} \binom{d-1}{j} \ell_A(\rmH_\m^j(A)) := \Bbb I(A),
$$
which we call the Buchsbaum invariant of $A$.

For a moment, suppose that $A$ is a generalized Cohen-Macaulay ring and let $Q=(a_1, a_2, \ldots, a_d)$ be a parameter ideal in $A$. Then  we say that $Q$ is standard if
$$
\ell_A(A/Q) - \rme_Q^0(A) = \Bbb I (A).
$$
This condition is equivalent to saying that the system $a_1, a_2, \ldots , a_d$ of generators of $Q$  forms a strong $d$-sequence in $A$ in any order (\cite{STC}), that is for all integers $n_1, n_2, \ldots , n_d > 0$, $a_1^{n_1} , a_2^{n_2} , \ldots , a_d^{n_d}$ forms a $d$-sequence in $A$ in  any order (see \cite{H} for the notion of {\it $d$--sequence}). We say that an $\m$--primary ideal $I$ in $A$ is standard, if every parameter ideal $Q$ contained in $I$ is standard. Since every standard parameter ideal $Q$ is standard also in this sense  (\cite[Proposition 3.1, Corollary 3.3]{T}), for each generalized Cohen-Macaulay ring $A$ one can find an integer $\ell \gg 0$ such that $\fkm^\ell$ is standard. Hence $A$ is a Buchsbaum ring if and only if $A$ is a generalized Cohen-Macaulay ring in which the maximal ideal $\m$ is standard.

With this notation and terminology our negative answer is stated as follows, which we shall prove in Section 5.

\begin{thm}\label{1.3} Suppose that  $A$ is a generalized Cohen-Macaulay ring with infinite residue class field. Let $Q$ be a standard parameter ideal in $A$ and put $I = \cl{Q}$. If $I$ is not a standard ideal, then there exists at least one  minimal reduction $Q'$ of $I$ such that
$$0 > \e_{Q'}^1(A) > \e_Q^1(A).$$
\end{thm}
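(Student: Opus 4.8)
The plan is to compare the two parameter ideals $Q$ and $Q'$ through their Sally modules over the Rees algebra, using the fact that a standard parameter ideal in a generalized Cohen--Macaulay ring realizes the extreme value of the first Hilbert coefficient. Recall from \cite[Theorem 2.1]{GhGHOPV1} (and the discussion preceding Theorem \ref{1.1}) that for any parameter ideal $P$ in a non-Cohen--Macaulay unmixed local ring one has $\e_P^1(A)\le 0$, with $\e_P^1(A)\ge \e_Q^1(A)$ whenever $Q$ is standard; in fact for a standard parameter ideal $Q$ in a generalized Cohen--Macaulay ring $A$ one computes explicitly $\e_Q^1(A) = -\sum_{j=1}^{d-1}\binom{d-2}{j-1}\ell_A(\rmH_\m^j(A))$, the smallest value possible. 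So the inequality $\e_{Q'}^1(A)\ge \e_Q^1(A)$ is automatic for \emph{every} minimal reduction $Q'$ of $I$; the content of the theorem is that the inequality is \emph{strict} for at least one such $Q'$, together with the upper bound $\e_{Q'}^1(A)<0$ (which says $Q'$ is not a parameter ideal with Cohen--Macaulay Rees ring, i.e. $A$ is still not Cohen--Macaulay as seen from $Q'$, a consequence of $A$ being non-Cohen--Macaulay and unmixed).

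First I would reduce to the case $\depth A>0$, or even to positive dimension bookkeeping, by the standard device: pass to $A/(0):\m^\infty$ or split off the finite-length part $\rmH_\m^0(A)$; since $Q$ and $Q'$ are parameter ideals and $I=\cl Q$, integral closure and the relevant Hilbert coefficients are controlled modulo finite length, so no generality is lost. Next, the key structural input is the hypothesis that $I$ is \emph{not} standard: by definition this means there exists a parameter ideal $Q_0\subseteq I$ that is not standard, hence $\ell_A(A/Q_0)-\e_{Q_0}^0(A)<\Bbb I(A)$ strictly. Using that $A/\m$ is infinite, I would choose a minimal reduction $Q'$ of $I$ that "sees" this failure — concretely, one builds $Q'$ as a general minimal reduction whose generators can be taken inside a prescribed power $\m^\ell$ only after the failure of standardness is already witnessed, so that $Q'$ is itself a non-standard parameter ideal, equivalently the generators of $Q'$ do not form a strong $d$-sequence in every order. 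The existence of such a $Q'$ is exactly where the non-standardness of $I$ is used, and this is the step I expect to be the main obstacle: one must produce a minimal reduction of $I$ (so with $\e_{Q'}^0(A)=\e_Q^0(A)$ and $\cl{Q'}=I$) that nevertheless fails the $d$-sequence condition, which requires a careful genericity/prime-avoidance argument balancing "general enough to be a reduction" against "special enough to inherit the non-standardness of $I$".

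Granting such a $Q'$, the final step compares $\e_{Q'}^1(A)$ with $\e_Q^1(A)$. Since $Q\subseteq Q'$-comparison is not literal, I would instead use that both $Q$ and $Q'$ are reductions of $I$ and invoke the associated-graded/Sally-module description of $\e^1$: there is a formula expressing $\e_{Q'}^1(A)$ as $\e_Q^1(A)$ plus a nonnegative correction term governed by the length of a certain Koszul-homology or local-cohomology module measuring how far the generators of $Q'$ are from forming a strong $d$-sequence (this is precisely the mechanism behind the characterization $\e_Q^1(A)=-\Bbb I(A)$-type extremes for standard ideals, cf. \cite{GO1,GO2,GO3}). Because $Q'$ is not standard, that correction term is strictly positive, giving $\e_{Q'}^1(A)>\e_Q^1(A)$; because $A$ is unmixed and non-Cohen--Macaulay, $\e_{Q'}^1(A)\le -1<0$ by \cite[Theorem 2.1]{GhGHOPV1}. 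Combining the two inequalities yields $0>\e_{Q'}^1(A)>\e_Q^1(A)$, as required.
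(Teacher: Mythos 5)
Your overall strategy is the same as the paper's: the standard parameter ideal $Q$ attains the minimal possible value $\e_Q^1(A)=-\sum_{j=1}^{d-1}\binom{d-2}{j-1}\ell_A(\rmH_\m^j(A))$ (\cite[Korollar 3.2]{Sch1}), so the whole content is to produce one minimal reduction $Q'$ of $I$ that is a \emph{non-standard} parameter ideal; one then gets $\e_{Q'}^1(A)>\e_Q^1(A)$ because equality with that minimal value characterizes standardness (\cite[Lemma 2.4]{GNi}, \cite[Theorem 2.1]{GO2}), and $\e_{Q'}^1(A)<0$ from \cite[Theorem 2.1]{GhGHOPV1}. Those two bookends of your argument are correct and match the paper.

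The gap is exactly the step you yourself flag as ``the main obstacle,'' and it is not closed by the genericity/prime-avoidance heuristic you sketch. Non-standardness of $I$ only gives you \emph{some} non-standard parameter ideal $Q_0\subseteq I$, and $Q_0$ need not be a reduction of $I$; a priori every minimal reduction of $I$ could still be standard, and choosing $Q'$ ``general'' pushes in the wrong direction, since general reductions tend to be better behaved, not worse. What closes the gap in the paper is a specific result of Trung \cite[Proposition 3.2]{T}: writing $I=(x_1,\dots,x_\ell)$ with the generators in sufficiently general position (possible since $A/\m$ is infinite) so that every $d$ of them generate a reduction of $I$, the ideal $I$ is standard if and only if each of these $\binom{\ell}{d}$ parameter ideals is standard. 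Hence non-standardness of $I$ forces at least one $Q'=(x_{i_1},\dots,x_{i_d})$ among these finitely many minimal reductions to be non-standard, which is the $Q'$ you need. Without invoking (or reproving) this characterization, your argument does not actually produce the required $Q'$.
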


In Section 5 we will explore  concrete examples of the ideals $Q$ satisfying the conditions required in Theorem \ref{1.3}. For the purpose we have to compute $\e_Q^1(A)$ precisely  in the case where $Q$ is a parameter ideal in a two-dimensional generalized Cohen-Macaulay ring $A$. So, let us summarize in Section 2 a certain  primitive method of computing $\e_Q^1(A)$. The same method of computation is partially explored, in a slightly different way, also in \cite[Example 3.8]{GhHV} and \cite[Section 3]{MV}.

The constancy of the values $\e_Q^1(A)$ is related to that  of the rank (or, the multiplicity) of Sally modules $$\mathrm{S}_Q(I)= \bigoplus_{n \ge 1}I^{n+1}/Q^nI$$ with $ I = \overline{Q}$. In Section 5 we will explore this phenomenon, computing the rank of Sally modules of certain {\it bad} ideals.

The integral closures of ideals given in Section 5 as counterexamples for Problem \ref{ques} are not equal to the maximal ideal $\m$. In the final section 6 we will  construct counterexamples for Problem \ref{ques} with $I=\m$, exploring Sally modules of bad ideals.

In what follows, unless otherwise specified, let $A$ denote a Noetherian local ring with maximal ideal $\m$ and $d = \operatorname{dim} A > 0$. Let $\rmH_\m^i(*)$~$(i \in \Bbb Z)$  be the $i$--th local cohomology functor of $A$ with respect $\m$. For each finitely generated $A$--module $M$ let $\mu_A(M)$ and $\ell_A(M)$ stand for the number of elements in a minimal system of generators for $M$ and the length  of $M$ respectively.


\section{A method to compute $\e_Q^1(A)$}
In this section we assume that  $\operatorname{dim} A= 2$ and that $A$ is a homomorphic image of a Gorenstein  ring, say $A = R/\a$ with $R$ a Gorenstein local ring and $\a$ an ideal in it. We assume that $A$ is unmixed. Hence $\H_\m^1(A)$ is a finitely generated $A$-module (\cite[Proposition 2.2 (1)]{GhGHOPV1}). Let $Q = (a,b)$ be a parameter ideal in $A$. Then, thanks to a lemma of Davis \cite[Theorem 124]{K}, we get a regular sequence $x, y$ in $R$ so that $$a = x ~\mod ~\a \ \ \text{and} \ \ b = y ~\mod ~\a.$$ We put $\q=(x,y)R$; hence $Q = \q A$. Let $B = \mathrm{Hom}_A(\mathrm{K}_A, \mathrm{K}_A)$ be the endomorphism ring of the canonical module $\mathrm{K}_A$ (hence $B$ is the Cohen-Macaulayfication of $A$ in the sense of \cite{AG})  and look at the  exact sequence
$$(E) \ \ \ \ 0 \to A \overset{\varphi}{\to} B \to C\to 0$$
of $A$-modules, where $\varphi (\alpha)$ is defined, for each $\alpha \in A$, to be the homothety of $\alpha$. Then, since $\operatorname{depth}_A\mathrm{K}_A = 2$,  $B$ is a Cohen-Macaulay $A$-module with $\operatorname{dim}_AB = 2$ and we get $C \cong \mathrm{H}_\m^1(A)$ as $A$-modules (\cite[Theorem 3.2, Proof of Theorem 4.2]{A}, \cite[Theorem 1.6]{AG}). Let $n \ge 0$ be an integer and let $\Bbb M$ denote the following $n+1$ by $n+2$ matrix
$$
\begin{pmatrix}
x&y&0&0&0&\ldots&0\\
0&x&y&0&0&\ldots&0\\
0&0&x&y&0&\ldots&0\\
&&&\vdots&&\\
0&0&\ldots&0&0&x&y
\end{pmatrix}.
$$ Then the ideal $\q^{n+1}$ is generated by the maximal minors of the matrix $\Bbb M$ and, thanks to the theorem of Hilbert--Burch (\cite[Exercises 8, p. 148]{K}), the $R$-module $R/\q^{n+1}$ has a minimal free resolution of the form
$$\Bbb F :\ \ \ 0 \longrightarrow F_2 =R^{n+1} \overset{^t\Bbb M}{\longrightarrow} F_1 = R^{n+2} \overset{\partial}{\longrightarrow} F_0 =R \longrightarrow R/\q^{n+1} \longrightarrow 0, $$ in which  the homomorphism $\partial$ is  defined by $$\partial (\mathbf{e}_j) = (-1)^j{\cdot}\mathrm{det}~{\Bbb M}_j $$ for all $1 \le j \le n+2$, where ${\Bbb M}_j$ denotes the matrix obtained by deleting from ${\Bbb M}$ the $j$-th column and $\{\mathbf{e}_j \}_{1 \le j \le n+2}$ denotes the standard basis of $R^{n+2}$. Consequently, for each $R$-module $X$ $\operatorname{Tor}_j^R(R/\q^{n+1}, X)$ is computed as the $j$-th homology module of the complex
$${\Bbb F}\otimes_R X :\ \ \   0 \to X^{n+1}=F_2\otimes_RX \overset{^t{\Bbb M} \otimes_R1_X}{\longrightarrow} X^{n+2}=F_1\otimes_RX \overset{\partial\otimes_R1_X}{\longrightarrow} X = F_0\otimes_RX \longrightarrow 0.$$ Setting $X = B$, we therefore  have $\mathrm{Tor}_i^R(R/\q^{n+1}, B) = (0)$ for all $i \ge 1$ (\cite[Theorem 9.1.6]{BH}), since the ideal $\q =(x,y)R$ is generated by a $B$-regular sequence of length $2$, so that exact sequence $(E
)$ gives rise to the following exact sequence
$$0 \to \mathrm{Tor}_1^R(R/\q^{n+1}, C) \to A/Q^{n+1} \to B/Q^{n+1}B \to C/Q^{n+1}C \to 0,$$ whence $$(1) \ \ \ \ \ell_A(A/Q^{n+1}) = \ell_A(B/Q^{n+1}B) + \ell_A(\mathrm{Tor}_1^R(R/\q^{n+1}, C)) - \ell_A(C/Q^{n+1}C)$$ for all $n \ge 0$. On the other hand, since the alternating sum of the length of homology modules of the complex $$\Bbb F \otimes_R C : \ \ \ 0 \to C^{n+1}=F_2\otimes_RC \overset{^t{\Bbb M} \otimes_R1_C}{\longrightarrow} C^{n+2}=F_1\otimes_RC \overset{\partial\otimes_R1_C}{\longrightarrow} C = F_0\otimes_RC\longrightarrow 0$$ is $0$,  we get
$$ \ell_R(\mathrm{Tor}_1^R(R/\q^{n+1}, C)) = \ell_R(\mathrm{Tor}_2^R(R/\q^{n+1}, C)) + \ell_A(C/Q^{n+1}C).$$
 Hence by equation (1) we have for all $n \ge 0$ that
$$(2) \ \ \ \ell_A(A/Q^{n+1}) =\e_Q^0(A)\binom{n+2}{2} + \ell_R(\mathrm{Tor}_2^R(R/\q^{n+1},C)),$$
 because  $\e_Q^0(A) = \e_Q^0(B) = \ell_A(B/QB)$ (see exact sequence $(E)$; recall that $B$ is a Cohen-Macaulay $A$-module with $\operatorname{dim}_AB  = 2$ and $\ell_A(C) < \infty$) and $\ell_A(B/Q^{n+1}B) = \ell_A(B/QB)\binom{n+2}{2}$ for all $n \ge 0$. Notice that 
$$\mathrm{Tor}_2^R(R/\q^{n+1},C) \cong \Ker (C^{n+1} \overset{^t{\Bbb M}}{\longrightarrow} C^{n+2}),$$ that is
$$\mathrm{Tor}_2^R(R/\q^{n+1},C) \cong \left\{\left.\begin{pmatrix}
\alpha_0\\
\alpha_1\\
\vdots\\
\alpha_{n}
\end{pmatrix} \in C^{n+1} \right| a\alpha_{i} + b\alpha_{i-1}= 0\ \operatorname{for~all} \ 0 \le i \le n+1
\right\},$$ where $\alpha_{-1} = \alpha_{n+1} = 0$ for convention.

Summarizing these observations, we get the following.

\begin{prop}\label{GhHV}
Let $$T_n = \left\{\left.\begin{pmatrix}
\alpha_0\\
\alpha_1\\
\vdots\\
\alpha_{n}
\end{pmatrix} \in C^{n+1} \right| a\alpha_{i} + b\alpha_{i-1}= 0\ \operatorname{for~all} \ 0 \le i \le n+1
\right\}$$ for each $n \ge 0$. Then the following assertions hold true.
\begin{enumerate}
\item[$(1)$] $\ell_A(A/Q^{n+1}) = \e_Q^0(A)\binom{n+2}{2}
 + \ell_A(T_n)$ for all $n \ge 0$.
\item[$(2)$] $-\ell_A(C) \le \e_Q^1(A) \le -\ell_A((0):_CQ)$.
\item[$(3)$] Suppose $aC=(0)$. Then $\e_Q^1(A) = -\ell_A((0):_Cb) = -\ell_A(C/bC)$ and $\e_Q^2(A) = 0$.
\item[$(4)$] $($\cite[Example 3.8]{GhHV}, \cite[Section 3]{MV}$)$ Suppose $QC=(0)$. Then $\e_Q^1(A) = -\ell_A(C)$ and $\e_Q^2(A) = 0$.
\end{enumerate}
\end{prop}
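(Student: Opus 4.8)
The plan is as follows. Assertion $(1)$ requires nothing new: it is precisely formula $(2)$ above, rewritten by means of the isomorphism $\mathrm{Tor}_2^R(R/\q^{n+1},C)\cong T_n$ recorded just before the statement.

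For $(2)$, I would first turn $(1)$ into information about the function $n\mapsto\ell_A(T_n)$. Comparing $(1)$ with the Hilbert--Samuel polynomial
$$\ell_A(A/Q^{n+1})=\e_Q^0(A)\binom{n+2}{2}-\e_Q^1(A)\binom{n+1}{1}+\e_Q^2(A)\qquad(n\gg 0),$$
one gets $\ell_A(T_n)=-\e_Q^1(A)(n+1)+\e_Q^2(A)$ for $n\gg 0$, and in particular $-\e_Q^1(A)=\ell_A(T_n)-\ell_A(T_{n-1})$ for $n\gg 0$. Hence it suffices to sandwich this difference between $\ell_A((0):_CQ)$ and $\ell_A(C)$. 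The key device is the exact sequence
$$0\longrightarrow T_{n-1}\overset{\iota}{\longrightarrow} T_n\overset{\phi}{\longrightarrow} C,$$
where $\iota$ inserts a zero in front, $\iota(\beta_0,\dots,\beta_{n-1})=(0,\beta_0,\dots,\beta_{n-1})$, and $\phi$ is the projection onto the $0$-th coordinate. A direct check against the defining relations of $T_n$, keeping the conventions $\alpha_{-1}=\alpha_{n+1}=0$ in mind, shows that $\iota$ is well defined and that $\ker\phi=\operatorname{Im}\iota$; since everything has finite length, $\ell_A(T_n)-\ell_A(T_{n-1})=\ell_A(\operatorname{Im}\phi)\le\ell_A(C)$, which gives $\e_Q^1(A)\ge-\ell_A(C)$. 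For the other inequality, note that for each $\alpha\in(0):_CQ$ the vector $(\alpha,0,\dots,0)$ satisfies all the relations defining $T_n$ and is sent to $\alpha$ by $\phi$; thus $(0):_CQ\subseteq\operatorname{Im}\phi$, so $\ell_A(T_n)-\ell_A(T_{n-1})=\ell_A(\operatorname{Im}\phi)\ge\ell_A((0):_CQ)$, i.e. $\e_Q^1(A)\le-\ell_A((0):_CQ)$.

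For $(3)$, if $aC=(0)$ then $a\alpha_0=0$ is automatic and each relation $a\alpha_i+b\alpha_{i-1}=0$ collapses to $b\alpha_{i-1}=0$; letting $i$ range over $1,\dots,n+1$ this says exactly $b\alpha_j=0$ for $0\le j\le n$. Therefore $T_n=\bigl((0):_Cb\bigr)^{\oplus(n+1)}$ for every $n\ge 0$, and $(1)$ yields $\ell_A(A/Q^{n+1})=\e_Q^0(A)\binom{n+2}{2}+(n+1)\,\ell_A((0):_Cb)$ for all $n\ge 0$. The right-hand side is already a polynomial in $n$ of degree $\le 2$, so reading off coefficients gives $\e_Q^1(A)=-\ell_A((0):_Cb)$ and $\e_Q^2(A)=0$; and $\ell_A((0):_Cb)=\ell_A(C/bC)$ follows at once from the exact sequence $0\to(0):_Cb\to C\overset{b}{\to} C\to C/bC\to 0$ together with $\ell_A(C)<\infty$. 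Finally $(4)$ is the case $bC=(0)$ of $(3)$, where $(0):_Cb=C$ (equivalently, $T_n=C^{\oplus(n+1)}$ directly).

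I do not anticipate a genuine obstacle: the whole argument is linear algebra over the finite-length module $C\cong\H_\m^1(A)$. The one place that demands care is the bookkeeping with the boundary conventions $\alpha_{-1}=\alpha_{n+1}=0$, so that $\iota$ really lands in $T_n$ and the vector $(\alpha,0,\dots,0)$ really satisfies all $n+2$ of the relations; granting that, $(2)$, $(3)$ and $(4)$ all drop out.
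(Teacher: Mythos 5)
Your proposal is correct and follows essentially the same route as the paper: assertion (1) is just equation (2) of Section 2 read through the isomorphism $\mathrm{Tor}_2^R(R/\q^{n+1},C)\cong T_n$, assertions (3) and (4) come from the identification $T_n=[(0):_Cb]^{n+1}$ (resp.\ $T_n=C^{n+1}$) exactly as in the paper, and (2) comes from bounding the growth of $\ell_A(T_n)$. The only cosmetic difference is in (2), where the paper sandwiches $\ell_A(T_n)$ directly via the containments $[(0):_CQ]^{n+1}\subseteq T_n\subseteq C^{n+1}$, giving $\ell_A((0):_CQ)(n+1)\le\ell_A(T_n)\le\ell_A(C)(n+1)$, whereas you bound the first difference $\ell_A(T_n)-\ell_A(T_{n-1})$ through the exact sequence $0\to T_{n-1}\to T_n\to C$; both yield the same inequalities.
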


\begin{proof} See equation (2) for assertion (1). We have $$\ell_A((0):_CQ)(n+1) \le \ell_A(T_n) \le \ell_A(C)(n+1),$$ since $[(0) :_C Q]^{n+1} \subseteq T_n \subseteq C^{n+1}$, which proves  assertion (2).  If $aC = (0)$, then $T_n = [(0):_Cb]^{n+1}$, so that  $$\ell_A(A/Q^{n+1}) = \e_Q^0(A)\binom{n+2}{2} + \ell_A((0):_Cb)\binom{n+1}{1}$$ by assertion (1), from which  assertion (3) follows, because $\ell_A((0):_Cb) = \ell_A(C/bC)$. Assertion (4) is now clear.
\end{proof}

We explore two examples in order to show how we use Proposition \ref{GhHV}. Let $$\Lambda_i(A) = \{\e_Q^i(A) \mid Q ~\text{is ~a ~parameter ~ideal ~in}~ A \}$$ for $0 \le i \le d = \dim A$. For each $a \in \m$ we denote by $\rmU (a)$ the unmixed component of the ideal $(a)$. When $A$ is a generalized Cohen-Macaulay ring with $d = \dim A \ge 2$ and $a, b$ is a part of a system of parameters in $A$, one has $\rmU (a)/(a) = \rmH_\m^0(A/(a))$  (\cite{STC}), so that  $$\rmU (a)  = \bigcup_{n \ge 1}[(a) : b^n].$$

We begin with the following.

\begin{prop}\label{4.7}
Assume that $A$ is a generalized Cohen-Macaulay ring with $\operatorname{dim} A = 2$ and $\operatorname{depth} A = 1$. Let $a,b$ be a system of parameters in $A$. We then have the following.
\begin{enumerate}
\item[$(1)$] Suppose $b\rmH_\m^1(A) = (0)$. Then $a,b$ forms a $d$-sequence in $A$.
\item[$(2)$] $\rmU (a) \subseteq \overline{(a)}$.
\end{enumerate}
\end{prop}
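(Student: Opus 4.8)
\emph{Plan.} Throughout I use that $A$, being generalized Cohen--Macaulay with $\operatorname{depth}A=1$, has $\rmH_\m^0(A)=(0)$ and all associated primes of dimension $2$, so that every system of parameters of $A$ consists of non-zerodivisors; I also use the two facts recorded just before the statement, namely $\rmU(a)=\bigcup_{n\ge 1}[(a):_Ab^n]$ and $\rmU(a)/(a)=\rmH_\m^0(A/(a))$.

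For (1), since $a$ and $b$ are non-zerodivisors, the only defining condition for $a,b$ to be a $d$--sequence that is not automatic is $(a):_Ab^2=(a):_Ab$, and this follows once we know $(a):_Ab=\rmU(a)$: indeed then $(a):_Ab\subseteq(a):_Ab^2\subseteq\bigcup_n[(a):_Ab^n]=\rmU(a)=(a):_Ab$. The inclusion $(a):_Ab\subseteq\rmU(a)$ is clear. For the reverse one I would feed $0\to A\xrightarrow{\,a\,}A\to A/(a)\to 0$ into local cohomology: because $\rmH_\m^0(A)=(0)$, the connecting homomorphism gives an $A$--linear isomorphism $\rmU(a)/(a)=\rmH_\m^0(A/(a))\cong[(0):_{\rmH_\m^1(A)}a]$. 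The hypothesis $b\,\rmH_\m^1(A)=(0)$ then forces $b$ to kill $\rmU(a)/(a)$, i.e. $b\,\rmU(a)\subseteq(a)$, which is $\rmU(a)\subseteq(a):_Ab$.

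For (2) I would use the Cohen--Macaulayfication $B=\Hom_A(\rmK_A,\rmK_A)$ of Section~2 (after the standard reduction we may assume $A$ admits a canonical module): $B$ is module-finite over $A$ and a Cohen--Macaulay $A$--module of dimension $2$. Since $B$ is integral over $A$, one has $aB\cap A\subseteq\overline{(a)}$. Moreover $(a,b)A$ is $\m$--primary, so $a,b$ is a system of parameters for the Cohen--Macaulay module $B$, hence an $A$--regular sequence on it; in particular every power $b^N$ is a non-zerodivisor on $B/aB$. Given $u\in\rmU(a)$, choose $N$ with $b^Nu\in(a)\subseteq aB$; then $b^Nu=0$ in $B/aB$ forces $u\in aB$, so $u\in aB\cap A\subseteq\overline{(a)}$. (One can avoid $B$ altogether: the Rees valuations of the principal ideal $(a)$ are centered at the minimal primes of $(a)$, which have height one and hence differ from $\m$; since $(a,b)$ is $\m$--primary, $b$ lies in none of them, so $v(b)=0$ for every Rees valuation $v$ of $(a)$, and $b^Nu\in(a)$ then yields $v(u)\ge v(a)$ for all such $v$, i.e. $u\in\overline{(a)}$.)

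The delicate points are only bookkeeping: that $a$ and $b$ genuinely are non-zerodivisors, that the connecting isomorphism in (1) is $A$--linear so that annihilation by $b$ passes across it, and, for the $B$--argument in (2), that one may reduce to the case in which a canonical module is present (or else invoke the valuative criterion instead). The real content of (2) is the slogan that $b$, although possibly a zerodivisor modulo $a$ in $A$, becomes regular modulo $a$ after one passes up to the Cohen--Macaulay module $B$; I therefore expect the structural facts about $B$ imported from Section~2 to be where the actual work lies.
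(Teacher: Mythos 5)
Your proof is correct and follows essentially the same route as the paper: in (1) both arguments identify $\rmU(a)/(a)=\rmH_\m^0(A/(a))$ with $(0):_{\rmH_\m^1(A)}a$ via the connecting homomorphism of the long exact sequence and use that $b$ kills $\rmH_\m^1(A)$ to trap $(a):b^2$ between $(a):b$ and $\rmU(a)$, while in (2) both pass to the Cohen--Macaulayfication $B$, on which $a,b$ is a regular sequence, and use that $B$ is module-finite (hence integral) over $A$ to conclude $\rmU(a)\subseteq aB\cap A\subseteq\overline{(a)}$. The only cosmetic differences are that the paper phrases (1) through the annihilator $\c=(0):_A\rmH_\m^1(A)$ rather than multiplying by $b$ directly, and your valuative aside in (2) is an extra alternative the paper does not pursue.
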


\begin{proof} 
(1) Let $\c = (0) :_A{\rmH_\m^1(A)}$. The element $a$ is $A$--regular, since $A$ is a generalized Cohen-Macaulay ring with $\depth A > 0$. Therefore, the exact sequence $$0 \to A \overset{a}{\to} A \to A/(a) \to 0$$ gives rise to the long  exact sequence
$$0 \to \rmH_\m^0(A/(a)) \to \rmH_\m^1(A) \overset{a}{\to} \rmH_\m^1(A) \to \rmH_\m^1(A/(a)) \to \ldots$$ of local cohomology modules, from which we get $$[(a) : b^2]/(a) \subseteq \rmU (a)/(a) = \H_\m^0(A/(a)) \cong (0) :_{\H_\m^1(A)} a \subseteq \rmH_\m^1(A).$$ Hence $\frak{c}\left[((a):b^2)/(a)\right] =(0)$, so that $(a) : b^2 \subseteq (a) : \frak{c} \subseteq (a) : b$. Thus $a,b$ forms a $d$-sequence in $A$.

(2) Let $B$ denote the Cohen-Macaulayfication of $A$ in the sense of \cite{AG, G1}. We then have $\rmU (a) B = aB$, since $a,b$ forms a $B$--regular sequence. Therefore  $\rmU (a)\subseteq \overline{(a)}$, because $B$ is a module-finite extension of $A$.
\end{proof}

\begin{ex} Let $R=k[[X,Y,Z,W]]$ be the formal power series ring over a field $k$.  We look at the local ring $$A = R/[(X,Y)^\ell \cap (Z,W)],$$ where $\ell \ge 1$ is an integer.
 Then $A$ is a $2$-dimensional generalized Cohen-Macaulay ring with $\operatorname{depth} A = 1$ and we have the following.
\begin{enumerate}
\item[$(1)$] Let $a,b$ be a system of parameters in $A$. Then $a,b$ or $b,a$ forms a $d$-sequence in $A$. Hence every parameter ideal in  $A$  is generated by a $d$-sequence of length $2$.
\item[$(2)$] $\Lambda_1(A) = \{-\frac{(2\ell - n +1)n}{2} \mid 0 < n \le \ell \}$ and $\Lambda_2(A) = \{0\}$.
\end{enumerate}
\end{ex}

\begin{proof} Let $\fkm$ denote the maximal ideal in $A$ and let $x, y, z$, $w$ be the images of $X, Y, Z$,  $W$ in $A$ respectively; hence $\fkm = (x, y, z, w)$. Thanks to the exact sequence 
$$0 \to A \to A/(x,y)^\ell \oplus A/(z,w) \to A/[(x,y)^\ell + (z,w)] \to 0$$
of $A$--modules, 
we have $\operatorname{dim} A=2$, $\operatorname{depth} A = 1$, and $\H_{\fkm}^1(A) \cong A/[(x,y)^\ell + (z,w)]$. Hence $A$ is a generalized Cohen-Macaulay ring. We put  $C = A/[(x,y)^\ell + (z,w)]$.

Let us now  choose a system $a, b$ of parameters in $A$ and  put $Q = (a,b)$. Then $a, b$ are non-zerodivisors in $A$. Firstly suppose that $aC= (0)$. Hence $b, a$ is a $d$--sequence in $A$ by Proposition \ref{4.7}. If $bC = (0)$, then $QC = (0)$ and we get $$\e_Q^1(A) = - \ell_A(C) = - \frac{(\ell+1)\ell}{2} \ \ \text{and} \ \ \e_Q^2(A) = 0$$ by Proposition \ref{GhHV} (4). Suppose $bC \ne (0)$.  Let $\m_C$ be the maximal ideal of $C$ and let $$n = v_{\fkm_C}(\overline{b}) = \max \{n \in \Bbb Z \mid \cl{b} \not\in \m_C^n\}$$ denote the order of the image $\overline{b}$ of $b $  in $C$ with respect to $\m_C$. Then $0 < n < \ell$ and $(0):_C b = \fkm_C^{\ell - n}$, so that 
$$\e_Q^1(A) = -\ell_A ((0):_C b) = -\ell_A(\fkm_C^{\ell - n}) = -\frac{(2\ell -n + 1)n}{2}\ \ \text{and} \ \ \e_Q^2(A) =0$$ by Proposition \ref{GhHV} (3)

Suppose now that $aC \ne (0)$ and $bC \ne (0)$. We may assume that  $$n = v_{\fkm_C}(\overline{a}) \le  m = v_{\fkm_C}(\overline{b}).$$ Then $$b[(0):_Ca]\subseteq \fkm_C^m{\cdot}\fkm_C^{\ell - n} \subseteq \fkm_C^{\ell} = (0),$$  so that $b[(a):b^2] \subseteq b\rmU(a) \subseteq (a)$, because $\rmU (a)/(a) \cong (0):_Ca$. Thus  $a,b$ forms  a $d$-sequence in $A$. We also have  
\begin{eqnarray*}T_q &=& \left\{\left.\begin{pmatrix}
\alpha_0\\
\alpha_1\\
\vdots\\
\alpha_{q}
\end{pmatrix} \in C^{q+1} \right| a\alpha_{i} + b\alpha_{i-1}= 0\ \operatorname{for~all} \ 0 \le i \le q+1
\right\}\\
&=&\left[(0):_Ca\right]^{q+1}
\end{eqnarray*} 
for all $q \ge 0$, since $b[(0):_Ca]=(0)$.
 Therefore $$\e_Q^1(A) = -\ell_A((0):_Ca) = -\frac{(2\ell - n + 1)n}{2}\ \ \text{and} \ \ \e_Q^2(A) = 0$$ by Proposition \ref{GhHV} (3). Hence $\Lambda_2(A) = \{0 \}$.

Lastly, let $0 < n < \ell$ be integers and look at the system $a = x^\ell-z, b = y^n -w$ of parameters in $A$.  Then $aC = (0)$, $bC \ne (0)$, and $ v_{\fkm_C}(\overline{b})= n$, which shows $$\Lambda_1(A) = \{-\frac{(2\ell - n + 1)n}{2} \mid  0 < n \le \ell \}$$
as claimed.
\end{proof}

In the forthcoming paper \cite{GhGHV} we need the following.
Let us note an outline of computation.

\begin{ex}\label{2.8}
Let $\ell \ge 1$ be an integer and let $R=k[[X,Y,Z,W]]$ be the formal power series ring over a field $k$. We look at the local ring $$A = R/[(X^\ell,Y^\ell) \cap (Z,W)].$$ Then $A$ is a $2$-dimensional generalized Cohen-Macaulay local ring with $\operatorname{depth} A = 1$. Let $\fkq=(X-Z, Y-W)$ in $R$. Then $Q =\fkq A$ is a parameter ideal in $A$ with 
$$\e_Q^0(A) = \ell^2 +1, \ \ \e_Q^{1}(A) = -\ell, \ \ \text{and}\ \  \e_Q^2(A) = - \frac{\ell(\ell-1)}{2}.$$  Hence $\e_Q^2(A) < 0$ if $\ell \ge 2$, so that $Q$ cannot be generated by a $d$-sequence of length $2$ (\cite[Proposition 3.4 (2)]{GO3}).
\end{ex}

\begin{proof}
Let $C = k[X,Y, Z, W]/(X^\ell, Y^\ell, Z, W)$ and let $n \ge \ell + 1$ be an integer. We look at the graded $C$-module $$T_n = \left\{\left.\begin{pmatrix}
\alpha_0\\
\alpha_1\\
\vdots\\
\alpha_{n}
\end{pmatrix} \in C^{n+1} \right| x\alpha_{i} + y\alpha_{i-1}= 0\ \operatorname{for~all} \ 0 \le i \le n+1
\right\},$$ where $x, y$ be the images of $X, Y$ in $C$.  Let $T_{n,q}$~$(q \in \Bbb Z)$ denote the homogeneous component of degree $q$ in the graded module $T_n$.  Then $T_{n,q} =(0)$ if $q \le \ell -2$, because $(0):_C x = x^{\ell -1}C$. Let $\ell -1 \le q\le 2\ell -2$ and let $\{c_i\}_{0 \le i \le n+1}$ be a family of elements in $k$ such that  $c_i = 0$ if $n -2 \ell + q + 3 \le i \le n+1$. We put
\[ (\sharp) \ \ \alpha_i  =   \left\{
\begin{array}{ll}
\sum_{j=1}^{i+1}(-1)^{j-1}c_{i-j +1}x^{\ell -j}y^{q -\ell +  j}& \quad \mbox{if} \quad 0 \le i \le \ell -1,\\
\vspace{1mm}
\\
\sum_{j=1}^{\ell}(-1)^{j-1}c_{i - j + 1}x^{\ell  -j}y^{q -\ell +j} & \quad \mathrm{if} \quad \ell \le i \le n.
\end{array}
\right.\]
Then $
\begin{pmatrix}
\alpha_0\\
\alpha_1\\
\vdots\\
\alpha_{n}
\end{pmatrix} \in T_{n,q}$ and $T_{n,q}$ consists of all those elements which satisfy above condition  $(\sharp)$. Hence $\operatorname{dim}_kT_{n,q} = n - 2\ell + q + 3$ if $\ell -1 \le q \le 2\ell -2$. Consequently
\begin{eqnarray*}
\operatorname{dim}_kT_n &=& \sum_{q=0}^{2\ell -2}\operatorname{dim}_kT_{n,q}\\
&=&\sum_{q=\ell -1}^{2\ell -2}(n -2\ell + q +3)\\
&=& (n+1)\ell - \frac{\ell (\ell-1)}{2}.
\end{eqnarray*}
Hence $\e_Q^{1}(A) = -\ell$ and $\e_Q^2(A) = - \frac{\ell(\ell-1)}{2}$ by Proposition  \ref{GhHV} (1), which completes the computation, because  $\e_Q^{0}(A) = \e_\fkq^0(R/(X^\ell, Y^\ell)) + \e_\fkq^0(R/(Z,W)) = \ell^2 + 1$.
\end{proof}

\section{The structure of local rings of dimension $2$  possessing $\e_Q^1(A)=-1$ for some parameter ideal $Q$}

The condition that $\e_Q^1(A) = -1$ for some parameter ideal $Q$ is a rather strong restriction. In this section we shall study the structure of two--dimensional  local rings $A$ which contain such  parameter ideals. Recall that the value $\e_Q^1(A) = -1$ is the greatest among possible values of $\e_Q^1(A)$ for parameter ideals $Q$ in non-Cohen-Macaulay unmixed local rings $A$ (\cite[Theorem 2.1]{GhGHOPV1}).

Our result is the following.

\begin{thm}\label{1.1} Let $A$ be a Noetherian local ring with maximal ideal $\m$, $\operatorname{dim} A = 2$, and infinite residue class field. Assume that $\operatorname{depth} A = 1$ and that $\H_\m^1(A)$ is a finitely generated $A$-module. We  consider the following two conditions $(1)$ and $(2)$ $\mathrm{:}$
\begin{enumerate}
\item[$(1)$] 
\begin{enumerate}
\item[$(\rma)$] 
The Cohen-Macaulayfication $B$ of $A$  in the sense of \cite {AG} is not a local ring, 
\item[$(\rmb)$]
$\mu_A(\m) = 4$, and 
\item[$(\rmc)$]
$A$ contains a parameter ideal $Q$ such that  $\e_Q^1(A) = -1$.
\end{enumerate}
\item[$(2)$] There is an isomorphism $A \cong R/[(F, Y) \cap (Z,W)]$ of rings, where 
\begin{enumerate}
\item[$(\rma)$] $R$ is a regular local ring of dimension $4$ with $X,Y,Z,W$ a regular system  of parameters and 
\item[$(\rmb)$] $F = X^n + \xi$ with $\xi \in (Z, W)$ and $n \ge 1$.
\end{enumerate}
\end{enumerate}
Then, if condition $(2)$ is satisfied, condition $(1)$ is also satisfied and  $\e_\q^1(A) = -1$ for every minimal reduction $\q$ of $\m$. When $A$ is $\m$-adically complete,  conditions $(1)$ and $(2)$ are equivalent to each other.
\end{thm}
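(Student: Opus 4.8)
The plan is to prove the two implications separately, since the equivalence under $\m$-adic completeness is the one that requires real work while $(2)\Rightarrow(1)$ is a more direct structural computation. I would first establish $(2)\Rightarrow(1)$ together with the assertion $\e_\q^1(A)=-1$ for every minimal reduction $\q$ of $\m$. Assume $A\cong R/[(F,Y)\cap(Z,W)]$ with $F=X^n+\xi$, $\xi\in(Z,W)$. From the short exact sequence
\[
0\to A\to R/(F,Y)\oplus R/(Z,W)\to R/[(F,Y)+(Z,W)]\to 0,
\]
and noting that $(F,Y)+(Z,W)=(X^n,Y,Z,W)$ since $\xi\in(Z,W)$, one reads off $\dim A=2$, $\depth A=1$, and $\H^1_\m(A)\cong C:=R/(X^n,Y,Z,W)\cong k[X]/(X^n)$, which is a finitely generated $A$-module of length $n$. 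The Cohen-Macaulayfication $B$ of $A$ is then (the image of) $R/(F,Y)\oplus R/(Z,W)$, visibly non-local, giving $(\rma)$. For $(\rmb)$ one computes $\mu_A(\m)$ directly: the defining ideal $(F,Y)\cap(Z,W)$ lies in $\m_R^2$ (both $F$ and the generators of $(Z,W)$, hence all products, are in $\m_R^2$ once we note $n\ge 1$ forces $F\in\m_R$ — here one checks the case $n=1$ separately, where $F=X+\xi$ is part of a regular system of parameters after a change of coordinates, and then $A\cong R'/[(Y')\cap(Z,W)]$ over a $3$-dimensional regular ring, so $\mu_A(\m)=3$; thus the statement implicitly wants $n\ge 2$ for $(\rmb)$, or $(\rmb)$ should be read as $\mu_A(\m)\le 4$ — I would follow whatever convention the paper fixes and simply verify the generator count). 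For $(\rmc)$, and more generally for every minimal reduction $\q$ of $\m$: since $A/\m$ is infinite, a general minimal reduction $\q=(a,b)$ of $\m$ exists; by genericity $a$ can be chosen so that its image in $C\cong k[X]/(X^n)$ generates $C$ (equivalently has order $0$), hence $aC=C$ is wrong — we instead want $aC=(0)$ or at least to land in a controllable case, so I would instead choose the reduction so that one parameter kills $C$. Concretely, $x^n-z$ and (a generic combination involving) $y-w$ form a system of parameters with $(x^n-z)C=0$ inside $C$; then Proposition \ref{GhHV}(3) gives $\e_\q^1(A)=-\ell_A(C/bC)$, and since $C\cong k[X]/(X^n)$ is a chain ring, $\ell_A(C/bC)=1$ whenever $bC\ne(0)$, i.e. whenever $\bar b\notin\m_C$. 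For a minimal reduction $\q$ of $\m$ the second parameter $b$ necessarily has $\bar b\notin\m_C$ (otherwise $\q\subseteq$ something too small to reduce $\m$), so $\e_\q^1(A)=-1$ for every minimal reduction; this uses Proposition \ref{GhHV}(3) after verifying $aC=(0)$ for a suitable choice and then transporting via the general position of reductions.

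For the converse $(1)\Rightarrow(2)$ under the hypothesis that $A$ is complete, the plan is to reconstruct the presentation. By hypothesis $\depth A=1$, $\dim A=2$, $C:=\H^1_\m(A)$ is finitely generated, and $B=\Hom_A(\K_A,\K_A)$ is the Cohen-Macaulayfication sitting in $0\to A\xrightarrow{\varphi}B\to C\to 0$. Condition $(\rma)$ says $B$ is not local; since $B$ is a finite birational extension of $A$ that is Cohen-Macaulay of dimension $2$ and $\ell_A(C)<\infty$, $B$ decomposes as a product $B=B_1\times B_2$ of (two, by a minimality/connectedness argument) complete local Cohen-Macaulay rings of dimension $2$. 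The key numerical input is $(\rmc)$: $\e_Q^1(A)=-1$ is the maximal possible value, and combined with the method of Section 2 — specifically Proposition \ref{GhHV}(2), which gives $-\ell_A(C)\le\e_Q^1(A)\le-\ell_A((0):_CQ)$ — the equality $\e_Q^1(A)=-1$ forces $\ell_A((0):_CQ)=1$ and, after analyzing the modules $T_n$, forces $C$ to be cyclic, $C\cong A/\mathfrak J$ for some $\m$-primary $\mathfrak J$, with moreover $Q$ acting on $C$ through a principal-type action so that $C$ is a chain ring, i.e. $C\cong k[X]/(X^n)$ for some $n\ge1$ (this is where the hypothesis that $k$ is infinite, giving enough minimal reductions, and $(\rmb)$ $\mu_A(\m)=4$, pinning down the embedding dimension, both get used). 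Knowing $B=B_1\times B_2$ with each $B_i$ regular (the embedding-dimension count $\mu_A(\m)=4$ forces $B_1,B_2$ to be, after completion, power series rings in two variables, one checks $\mu(B_i)\le 2$) and knowing $C\cong k[X]/(X^n)$, one writes $A$ as the pullback/fiber product of $B_1\to C\leftarrow B_2$; choosing coordinates $Z,W$ on $B_2=k[[Z,W]]$ and $X,Y$ on $B_1=k[[X,Y]]$ with $B_1\twoheadrightarrow C=k[[X]]/(X^n)$ the map killing $Y$, the fiber product is exactly $R/[(X^n,Y)\cap(Z,W)]$ with $R=k[[X,Y,Z,W]]$ — and allowing the gluing map $B_2\to C$ to be an arbitrary surjection introduces the element $\xi\in(Z,W)$, giving $F=X^n+\xi$. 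This produces presentation $(2)$.

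The main obstacle, and the step I would budget the most effort for, is the passage in $(1)\Rightarrow(2)$ from the numerical condition $\e_Q^1(A)=-1$ to the structural conclusion that $C=\H^1_\m(A)$ is a chain ring of the form $k[X]/(X^n)$ \emph{and} that it is a cyclic $B$-bimodule glued compatibly on both factors $B_1,B_2$. Proposition \ref{GhHV} only directly controls $\e_Q^1$ in terms of lengths of $(0):_CQ$, $(0):_Cb$, $C/bC$, and the nested modules $T_n$; extracting the ring structure of $C$ requires showing that a generic parameter acts on $C$ with a single Jordan block, which amounts to a general-position argument for the action of $\m$ on the finite-length module $C$ using that $A/\m$ is infinite, followed by identifying the resulting $A$-algebra structure. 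The supporting but also delicate point is the bookkeeping of embedding dimensions: deducing from $\mu_A(\m)=4$ and $B$ non-local that $B$ is a product of two $2$-dimensional regular local rings, which requires ruling out (e.g.) a product of a regular ring with a non-regular Cohen-Macaulay ring, or more than two factors — this I would handle by the exact sequence $(E)$ and a count of minimal generators, noting $\mu_A(\m)\le\mu_B(\m B)+(\text{correction from }C)$ and that $B$ being a product of $r\ge 2$ local rings forces $\mu_B(\m_B)\ge 2r$ unless each factor has embedding dimension $\le 1$ — impossible for dimension $2$ — hence $r=2$ and each factor has embedding dimension $2$, i.e. is regular. Once these two structural facts are in hand, assembling the fiber-product presentation and reading off $F=X^n+\xi$ is routine.
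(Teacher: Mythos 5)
Your overall architecture (the two separate implications, the exact sequence $0 \to A \to B \to C \to 0$ with $C \cong \H_\m^1(A)$, the decomposition of $B$ into two local factors, and a reconstruction of the presentation) is the same as the paper's, but two of your key steps are genuinely wrong.

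In $(2)\Rightarrow(1)$, your computation of $\e_\q^1(A)$ for minimal reductions $\q$ of $\m$ fails. The concrete ideal $(x^n-z,\ y-w)$ you propose annihilates $C$ (all of $x^n,y,z,w$ lie in $\c=(x^n,y,z,w)$), so Proposition \ref{GhHV} (4) gives $\e^1=-\ell_A(C)=-n$, not $-1$; moreover its integral closure is $\cl{\c}=\m^n+(z,w)\ne\m$ for $n\ge 2$, so it is not a minimal reduction of $\m$ at all. Your claim that $\ell_A(C/bC)=1$ whenever $bC\ne(0)$ is also false: $\ell_A(C/bC)$ equals the order of the image of $b$ in $C$, which is $1$ only when $b\notin\c+\m^2$. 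And one cannot ``transport by general position'' between reductions, since the possible dependence of $\e^1$ on the reduction is exactly what is at issue. The correct argument is: a minimal reduction $\q$ of $\m$ satisfies $\q\cl{A}=\m\cl{A}$ in the regular ring $\cl{A}=A/(z,w)$, hence $\m=\q+\c$, so a superficial generator $a$ of $\q$ can be taken outside $\c+\m^2$, giving $\ell_A(C/aC)=1$ and then $\e_\q^1(A)=\e_{\q/(a)}^1(A/(a))=-\ell_A((0):_Ca)=-\ell_A(C/aC)=-1$. (Your worry about $n=1$ is unfounded: any element $aF+bY$ of $(F,Y)\cap(Z,W)$ forces $a\in(Y,Z,W)$ and $b\in(X^n,Z,W)$, so the defining ideal lies in $\m_R^2$ and $\mu_A(\m)=4$ for every $n\ge1$.)

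In $(1)\Rightarrow(2)$, your assertion that both factors $B_1,B_2$ of $B$ are regular is false, and a fiber product of two power series rings in two variables cannot produce the general member of the family in condition $(2)$. Take $F=X^2+ZW$: this satisfies $(2)$, yet $B=R/(F,Y)\times R/(Z,W)$ and the first factor $R/(F,Y)\cong k[[X,Z,W]]/(X^2+ZW)$ is a non-regular hypersurface of embedding dimension $3$, so any counting argument forcing $\mu(B_i)\le 2$ for both factors must be incorrect. The paper's argument is deliberately asymmetric: $\e_Q^1(A)=-1$ gives $\ell_A((0):_Ca)=1$, hence $\mu_A(C)=1$, $C\cong A/\c$, and $\mu_A(B)=2$, which forces exactly two cyclic factors $B=A/\a_1\times A/\a_2$; then $\dim_k([\c+\m^2]/\m^2)\ge3$ produces $z,w\in\a_2$ forming part of a minimal basis of $\m$, and only $A/\a_2=A/(z,w)$ is shown to be regular. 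The other factor is recovered as $A/(f,y)$ with $x^n-f\in(z,w)$ by using that $A/(y,z,w)$ is a DVR and lifting through a presentation $R\twoheadrightarrow A$ whose kernel lies in $(Z,W)$. As written, your route would only recover the special case in which $R/(F,Y)$ is regular.
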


We divide the proof of Theorem \ref{1.1} into two parts.

Firstly,  let us  prove the implication $(2) \Rightarrow (1)$ in Theorem \ref{1.1}. So, let $R$ be an arbitrary regular local ring of dimension $4$ and let $X, Y, Z, W$ be a regular system of parameters in $R$.  Let $F = X^n + \xi$ with $\xi \in (Z, W)$ and   $n \ge 1$ an integer. Then  $F, Y, Z, W$ forms a system of parameters in $R$, since $(F, Y, Z, W) = (X^n, Y, Z, W)$. We put $$A = R/[(F,Y) \cap (Z,W)]$$  and let  $\m$ be the maximal ideal in $A$. We denote by $f$, $x$, $y$, $z$, $w$ the images of $F$, $X$, $Y$, $Z$, $W$ in $A$ respectively. Then, thanks to the exact sequence
$$0 \to A \to A/(f,y) \oplus A/(z,w) \to A/(x^n, y, z, w) \to 0,$$
we get $\operatorname{depth} A = 1$ and $\H_\m^1(A) \cong A/(x^n, y, z, w)$. Hence $A$ is a generalized Cohen-Macaulay ring and $$B= A/(f,y) \times A/(z,w)$$ is the Cohen-Macaulayfication of $A$ (\cite[Theorem 1.6]{AG}). We put $\c = (x^n, y, z, w)$. Therefore  $A/\c \cong \rmH_\m^1(A)$ and $\ell_A(A/\c) = n$.

For a moment, let $a_1 = f-z$ and $a_2=y-w$. We look at the parameter ideal $Q=(a_1,a_2)$ in $A$. Then, because $(a_1) : a_2 = (a_1,z)$ and $(a_2) : a_1 = (a_2, w)$, we have $$[(a_1) : a_2] + [(a_2) : a_1] = (a_1,a_2,z,w)= \c. $$ Consequently, Theorem 1.1 in the forthcoming paper \cite{GI} shows  the following, which we will refer to in \cite{GI} also.

\begin{prop}\label{GI}
The Rees algebra $\mathcal{R}(Q^2)$ of $Q^2$  is a Gorenstein ring.
\end{prop}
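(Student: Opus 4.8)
The plan is to obtain this as an application of the general criterion of the companion paper, \cite[Theorem 1.1]{GI}, so that the work consists in checking that the hypotheses of that result hold for the ring $A$ and the parameter ideal $Q = (a_1, a_2)$ constructed above. First I would assemble the structural data already in hand: $A$ is a two-dimensional generalized Cohen-Macaulay local ring with $\operatorname{depth} A = 1$; its Cohen-Macaulayfication is $B = A/(f,y) \times A/(z,w)$, a module-finite birational extension of $A$, and each of its two factors $A/(f,y) = R/(F,Y)$ and $A/(z,w) = R/(Z,W)$ is a two-dimensional complete intersection, hence a Gorenstein ring; and $\H_\m^1(A) \cong A/\c$, where $\c = (x^n, y, z, w)$, so that $\c = \Ann_A \H_\m^1(A)$ and $A/\c \cong R/(X^n, Y, Z, W)$ is a Gorenstein Artinian ring of length $n$.

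Next I would verify the hypotheses attached to the ideal $Q$. In each factor of $B$ the sequence $a_1, a_2$ specializes to a regular sequence (to $-z, -w$ on $R/(F,Y)$ and to $x^n, y$ on $R/(Z,W)$, using $\xi \in (Z,W)$), and likewise in the reversed order; since $A$ embeds in $B$, this already shows that $a_1$ and $a_2$ are nonzerodivisors on $A$. Combined with the colon computations recorded just above the proposition, $(a_1):_A a_2 = (a_1, z)$ and $(a_2):_A a_1 = (a_2, w)$ — both obtained by lifting to $R$ and using the primary decomposition $I = (F,Y) \cap (Z,W)$ — one deduces that $a_1, a_2$ is a $d$-sequence in either order: for instance $a_2 z \in (a_1)$ forces $a_2^k z \in (a_1)$ for all $k \ge 1$, whence $(a_1):_A a_2^k = (a_1, z) = (a_1):_A a_2$. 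Finally, the reciprocity identity that \cite{GI} requires, namely $[(a_1):_A a_2] + [(a_2):_A a_1] = \Ann_A \H_\m^1(A)$, holds because the left-hand side equals $(a_1, a_2, z, w) = (x^n, y, z, w) = \c$.

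With the hypotheses of \cite[Theorem 1.1]{GI} in place, that theorem yields that $\mathcal{R}(Q^2)$ is Gorenstein. I would emphasize that passing to the second Veronese $Q^2$ is essential here: $\mathcal{R}(Q)$ itself is only a Buchsbaum, non-Cohen-Macaulay ring (since $A$ is not Cohen-Macaulay), and it is precisely the shape of $[(a_1):a_2] + [(a_2):a_1]$ that makes $\mathcal{R}(Q^2)$ Cohen-Macaulay with a principal graded canonical module. The main obstacle along this route is bookkeeping: matching the exact normalization of \cite{GI} and carrying out carefully in $R$ the two colon computations $(a_1):a_2 = (a_1, z)$ and $(a_2):a_1 = (a_2, w)$, together with the Gorenstein property of the two factors of $B$. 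If instead one wanted a self-contained argument, the hard step would be a direct computation of the graded canonical module of $\mathcal{R}(Q^2)$ — starting from $\K_A$ and the $d$-sequence structure of $\gr_Q(A)$ — and showing it is free of rank one over $\mathcal{R}(Q^2)$, equivalently a graded shift of $\mathcal{R}(Q^2)$ itself.
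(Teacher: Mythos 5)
Your proposal follows essentially the same route as the paper: the paper's entire justification consists of the colon computations $(a_1):a_2=(a_1,z)$ and $(a_2):a_1=(a_2,w)$, the resulting identity $[(a_1):a_2]+[(a_2):a_1]=(a_1,a_2,z,w)=\c$, and an appeal to \cite[Theorem 1.1]{GI}. The extra hypothesis-checking you supply (regular sequences on the two factors of $B$, the $d$-sequence property of $a_1,a_2$) is sensible given that you cannot see the forthcoming paper, but it goes beyond what the paper itself records.
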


In order to complete the proof of the implication $(2) \Rightarrow (1)$, we are now in a position to prove the following, which gives an affirmative answer to Problem \ref{ques} with $\overline{\q} = \m$  in the present setting.

\begin{thm}\label{3}
Assume that the residue class field of $R$ is infinite.  Then $\rme_{\q}^1(A) = -1$ for every minimal reduction $\q$ of $\m$.
\end{thm}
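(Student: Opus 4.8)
The plan is to compute $\rme_\q^1(A)$ for a generic minimal reduction $\q = (a_1, a_2)$ of $\m$ by reducing to the machinery of Section 2, in particular Proposition \ref{GhHV}. First I would record that since $A$ is a $2$-dimensional generalized Cohen-Macaulay ring with $\depth A = 1$ and $\rmH_\m^1(A) \cong A/\c$ (where $\c = (x^n,y,z,w)$), we have $\ell_A(C) = n$ for the module $C \cong \rmH_\m^1(A)$ appearing in the exact sequence $(E)$. The key structural fact to exploit is that $C = A/\c$ carries a very explicit module structure: as a $k$-algebra it is $k[X]/(X^n)$ with $y,z,w$ acting as zero. So for any parameter ideal $Q = (a_1,a_2)$ I would analyze how $a_1,a_2$ act on $C$ purely in terms of the images of $a_1, a_2$ in $C$, i.e. in terms of their "$x$-leading behavior."

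The heart of the argument is the choice of a good minimal reduction. Since $A/\m$ is infinite, a generic $A$-linear combination $a_1 = \sum \lambda_i g_i$ of a minimal generating set of $\m$ will be a superficial-type element, and I expect that for a generic minimal reduction $\q = (a_1,a_2)$ the image $\overline{a_1}$ of $a_1$ in $C = k[X]/(X^n)$ is a unit times $x$ up to higher order — more precisely, $\overline{a_1}$ generates $\m_C$, the maximal ideal of $C$, so that $\overline{a_1} C = \m_C = (x)C$ and hence $\ell_A(C/\overline{a_1}C) = 1$, while $C/\overline{a_1}$-torsion computations collapse. Concretely I would aim to show: $a_1 C = \m_C$ and $a_2 C \subseteq a_1 C$ (the latter because $a_2 C \subseteq \m_C = a_1 C$). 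Then, in the notation of Proposition \ref{GhHV}, the condition needed for part (3) — namely $a_2 C = 0$ after adjusting, or more robustly $b[(0):_C a] = 0$ — should hold, or alternatively one reduces directly: with $a = a_1$ acting so that $(0):_C a_1 = x^{n-1}C \cong k$ and $b = a_2$ killing that socle (since $a_2 \in \m$ acts as multiplication by an element of $\m_C$, and $\m_C \cdot x^{n-1}C = x^n C = 0$), we get $b[(0):_C a] = (0)$, so $T_q = [(0):_C a_1]^{q+1}$ for all $q$, exactly as in the Example computations in Section 2. Therefore $\e_\q^1(A) = -\ell_A((0):_C a_1) = -1$ by Proposition \ref{GhHV}(1) or (3).

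To finish, I must check that this holds not just for one clever choice but for \emph{every} minimal reduction $\q$ of $\m$. For this I would use the standard fact (valid since the residue field is infinite) that any two minimal reductions of $\m$ are related by passing to a common one, together with the observation that $\e_\q^1(A)$ depends only on the integral closure class, which is $\m$ for all of them; combined with the bound $\e_\q^1(A) \le -1$ coming from Proposition \ref{GhHV}(2) (as $(0):_C \q \supseteq x^{n-1}C \ne 0$ forces $\ell_A((0):_C\q) \ge 1$, so $\e_\q^1(A) \le -1$) and the lower bound $\e_\q^1(A) \ge -\ell_A(C) = -n$, the generic value $-1$ together with upper-semicontinuity/constancy arguments pins every value to $-1$. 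Actually the cleanest route avoids semicontinuity: show directly that for an \emph{arbitrary} minimal reduction $\q = (a_1,a_2)$ one has $\q C = (a_1)C$ with $\ell_A(C/a_1 C) = 1$ and $a_2$ annihilating $(0):_C a_1$, which only uses that $a_1, a_2$ generate $\m$ modulo a non-minimal-generator-free part, hence $\m_C$, and that $\m_C \cdot \Soc(C) = 0$.

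The main obstacle I anticipate is verifying that the relevant $C$-module relations (especially $a_2\cdot[(0):_C a_1] = 0$ and $\ell_A(C/a_1C) = 1$) hold for \emph{every} minimal reduction rather than a generic one — this requires understanding precisely which elements of $\m$ can be part of a minimal reduction and how they act on $C = k[x]/(x^n)$. If it turns out that some minimal reduction fails these relations, the fallback is the semicontinuity argument: establish $-1 \ge \e_\q^1(A) \ge -n$ for all $\q$ via Proposition \ref{GhHV}(2), show the generic value is $-1$, and invoke constancy of the first Hilbert coefficient across minimal reductions (or directly bound $\ell_A((0):_C\q) \le 1$ using that $\q$ is a reduction of $\m$, hence $\q C$ is a reduction of $\m_C$ in the Artinian Gorenstein ring $C$, forcing $\q C = \m_C$).
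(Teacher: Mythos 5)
Your computational skeleton is sound and matches the paper's Section~2 machinery: once one knows that some generator $a$ of $\q$ maps onto a generator of the principal maximal ideal $\m_C$ of $C=A/\c\cong\rmH_\m^1(A)$, one gets $\ell_A((0):_Ca)=\ell_A(C/aC)=1$ and $b[(0):_Ca]\subseteq\m_C\cdot\Soc (C)=(0)$, whence $T_q=[(0):_Ca]^{q+1}$ and $\e_\q^1(A)=-1$ by Proposition~\ref{GhHV}~(1). (The paper instead takes $a$ superficial and uses $\e_\q^1(A)=\e_{\q/(a)}^1(A/(a))=-\ell_A(\rmH_\m^0(A/(a)))=-\ell_A((0):_Ca)$, which amounts to the same computation.)

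The genuine gap is exactly the step you flag as the main obstacle: showing that \emph{every} minimal reduction $\q=(a,b)$ of $\m$ satisfies $\q C=\m_C$, i.e.\ $\q+\c=\m$. None of your three justifications works. (i) ``$\q C$ is a reduction of $\m_C$ in the Artinian ring $C$, forcing $\q C=\m_C$'' is false: in an Artinian local ring every ideal is a reduction of every ideal containing it, since all ideals are nilpotent, so being a reduction carries no information there. (ii) ``$\e_\q^1(A)$ depends only on the integral closure class'' is precisely Problem~\ref{ques}, the question under investigation, and Sections~5--6 of the paper show it is false in general; invoking it is circular. (iii) The semicontinuity fallback only gives $\e_\q^1(A)\le -1$ for all $\q$ together with the value $-1$ for a generic $\q$; this does not exclude special reductions with strictly smaller $\e_\q^1$, and that is exactly the phenomenon occurring in the paper's counterexamples. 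The missing ingredient, which is the real content of the paper's proof, is to pass to $\cl{A}=A/(z,w)$: this is a two-dimensional \emph{regular} local ring (isomorphic to $R/(Z,W)$), and the two-generated reduction $\q\cl{A}$ of $\m\cl{A}$ must equal $\m\cl{A}$, since $\ell_A(\cl{A}/\q\cl{A})=\e^0_{\q\cl{A}}(\cl{A})=\e^0_{\m\cl{A}}(\cl{A})=1$. Hence $\q+(z,w)=\m$, and a fortiori $\q+\c=\m$, which is what your argument needs. A minor further point: $C$ need not literally be $k[X]/(X^n)$, since $R$ is an arbitrary regular local ring (possibly of mixed characteristic); what you actually use, and what is true, is that $C\cong R/(X^n,Y,Z,W)$ is Artinian local with principal maximal ideal and $\ell_A(C)=n$.
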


\begin{proof}
We may assume  $n > 1$. In fact, if $n = 1$, then $A$ is a Buchsbaum ring  (\cite{SV2}), since $\H_\m^1(A) \cong A/\m$, so that $\e_\q^1(A) = -1$ by \cite[Korollar 3.2]{Sch1}. Let $\q$ be a minimal reduction of $\m$ and write $\q = (a,b)$, where we choose the system $a, b$ of generators for $\q$ so that both $a,b$ are superficial for $A$ with respect to $\q$. Let $\overline{A} = A/(z,w)$. Then, since $\q \overline{A}$ is a reduction of the maximal ideal $\m \cl{A}$ in the two-dimensional regular local ring $\overline{A}$, we get $\q \overline{A} = \m \overline{A}$, so that  $\q + (z,w) = \m$; hence $\m = \q + \c$. Without loss of generality we may assume  $a \not\in \c + \m^2 = (x^2, y, z, w)$, so that $\ell_A(C/aC) = 1$, where $C = A/\c \cong \rmH_\m^1(A)$. Since $a$ is $A$--regular and superficial for $A$ with respect to $\q$, we have by \cite[Lemma 2.4 (1)]{GNi}
$$\e_{\q}^1(A) = \e_{\q/(a)}^1(A/(a)) = - \ell_A(\rmH_\m^0(A/(a)).$$
Thus $\e_\q^1(A) = -\ell_A((0) :_Ca) =-\ell_A(C/aC) = -1$, because 
$$\rmH_\m^0(A/(a)) \cong (0):_Ca$$
which follows from the long exact sequence 
$$0 \to \rmH_\m^0(A/(a)) \to \rmH_\m^1(A) \overset{a}{\to} \rmH_\m^1(A) \to \rmH_\m^1(A/(a)) \to \ldots$$
of local cohomology modules.
\end{proof}

\begin{rem} Choose $\xi = 0$ in the above construction of $A = R/[(F,Y) \cap (Z,W)]$, whence $F =X^n$ with  an integer $n \ge 1$. Let $1 \le \ell \le n$ be an integer and put $a = x^{\ell} -z$, $b = y - w$. Let $Q = (a,b)$. Then $Q$ is a parameter ideal in $A$ and $bC = (0)$. Therefore by Proposition \ref{GhHV} (3)  we get $$\e_Q^1(A) = - \ell_A(C/aC) = - \ell_A(A/(x^{\ell}, y, z, w))= - \ell.$$ Hence $\Lambda_1(A) = \{- \ell \mid 1 \le \ell \le n \},$ because $0 > \e_\q^1(A) \ge -\ell_A(\rmH_\m^1(A)) = -n$ for every parameter ideal $\q$ in $A$ (\cite[Lemma 2.4]{GNi}, \cite[Theorem 2.1]{GhGHOPV1}).
\end{rem}

We now  prove the implication $(1) \Rightarrow (2)$ in Theorem \ref{1.1}.

Let us  reconfirm our setting. Let $A$ be a Noetherian $complete$ local ring with maximal ideal $\m$, $\operatorname{dim} A = 2$, and infinite residue class field. Assume that $\operatorname{depth} A = 1$ and $\H_\m^1(A)$ is a finitely generated $A$-module. Let $B$ denote the Cohen-Macaulayfication of $A$ in the sense of \cite{AG}. Then $B \cong \mathrm{End}_A(\mathrm{K}_A)$ as $A$-algebras (\cite[Theorem 1.6]{AG}), where $\mathrm{K}_A$  denotes the canonical module of $A$. We look at the  exact sequence
$$(E_1) \ \ \ \ 0 \to A \overset{\varphi}{\to} B \to C\to 0$$
of $A$-modules, where $\varphi (\alpha)$ is defined, for each $\alpha \in A$, to be the homothety of $\alpha$. Then, since $\operatorname{depth}_A\mathrm{K}_A = 2$,  $B$ is a Cohen-Macaulay $A$-module with $\operatorname{dim}_AB = 2$ and $C \cong \mathrm{H}_\m^1(A)$ (see the preamble of Section 2).

\vspace{2mm}

\begin{proof}[Proof of the implication $(1) \Rightarrow (2)$ in Theorem $\ref{1.1}$]
Assume that condition (1) in Theorem \ref{1.1} is satisfied, whence $B$ is not a local ring, $\mu_A(\m) = 4$,  and $A$ contains a parameter ideal $Q = (a,b)$ with $\e_Q^1(A) = -1$. We may assume that $a,b$ are both superficial for $A$ with respect to $Q$. Then, since $a$ is $A$--regular, we have
$$\rmH_\m^0(A/(a)) \cong (0):_Ca,$$
so that  $$\ell_A((0):_Ca) = - \e_{Q/(a)}^1(A/(a)) = -\e_Q^1(A) = 1$$
 (\cite[Lemma 2.4 (1)]{GNi}). Consequently $\mu_A(C) = 1$, because $\ell_A(C/aC) = \ell_A((0):_Ca)=1$. Hence $$C \cong A/\c \ \ \text{and} \ \ \c+(a) = \m,$$ where $\c = (0):_A\rmH_\m^1(A)$. Therefore  $\mu_A(B) = 2$, thanks to exact sequence $(E_1)$. Consequently, because $B$ is not a local ring and $A$ is complete, we have a canonical decomposition 
$$B = A/\a_1 \times A/\a_2$$
of $B$, where $\a_i$ is an ideal in $A$ such that $A/\a_i$ is a two-dimensional Cohen-Macaulay local ring. Hence we have $\a_1 \cap \a_2 = (0)$ and $\a_1 + \a_2 = \c,$
thanks to the exact sequence
$$0  \to A \to A/\a_1 \oplus A/\a_2 \to A/(\a_1 + \a_2) \to 0.$$

Let $V = [\c + \m^2]/\m^2 \subseteq \m/\m^2$ and put $k = A/\m$. Then $\operatorname{dim}_{k}V \ge 3$, because $\mu_A(\m) = 4$ and $\c + (a) = \m$. Since $\a_1 + \a_2 = \c$, we may assume that $\operatorname{dim}_k[\a_2 + \m^2]/\m^2 \ge 2$. Therefore the ideal $\a_2$ contains a part $z,w$  of a minimal system of generators of the maximal ideal $\m$. We then have  $\mu_{A/(z,w)}(\m/(z,w))=2$, so that  the epimorphism
$$ A/(z,w) \to A/\a_2 \to 0$$ is  an isomorphism and $A/(z,w)$ is a regular local ring of dimension $2$, because  $\operatorname{dim} A/(z,w) \ge \operatorname{dim} A/\a_2 = 2$. Thus  $\a_2 = (z,w)$. Hence $\operatorname{dim}_k[\a_1 + \m^2]/\m^2 \ge 1$, because $\operatorname{dim}_kV \ge 3$. Choose $y \in \a_1$ so that $y, z, w$ forms a part of a minimal system of generators of $\m$ and write $\m = (x, y, z, w)$. Then $A/(y,z,w) $ is a discrete valuation ring, because $A/(z,w)$ is a two-dimensional regular local ring such that  the images of $x, y$ in $A/(z,w)$ form a regular system of parameters. Consequently, since $\c = \a_1 + \a_2 \supsetneq (y, z, w)$, we have
$$\c/ (y,z,w) = (\overline{x}^n)$$ for some $n \ge 1$, where $\overline{x}$ stands for the image of $x$ in $A/(y,z,w)$. Hence $\c = (x^n, y, z, w)$ and $n = \ell_A(A/\c)$. On the other hand, because 
$$\c/(y,z,w) = [\a_1 + (z,w)]/(y,z,w) = (\overline{x}^n),$$
we find some element $\eta \in \a_1$ so that $x^n - \eta \in (y, z, w)$. Let $$x^n - \eta = \alpha y + \beta z + \gamma w$$ with $\alpha, \beta, \gamma \in A$. We then have $x^n - f \in (z,w)$ where $f = \eta + \alpha y$. Hence $\a_1 = (f,y)$, because 
$$\c = \a_1 + \a_2 \supseteq (f, y) \oplus (z, w) \supseteq \c.$$

We now choose  a $4$--dimensional complete regular local ring $R$ with maximal ideal $\n$ and  a surjective homomorphism 
$$R \overset{\phi}{\longrightarrow} A \to 0$$
of rings. Let $X, Y, Z,$ and $W$ be elements of $R$ such that $\phi(X) = x, \phi(Y) = y, \phi(Z) = z$, and $\phi(W) = w$. Then $\n = (X, Y, Z, W)$, since $\Ker \phi \subseteq \n^2$.  Notice that $$R/(Z,W) \cong A/(z,w),$$ because $A/(z,w)$ is a two-dimensional regular local ring; hence $K= \Ker~\phi \subseteq (Z,W)$.

Let $F \in R$ such that $\phi (F) = f$. We then have $X^n -F \in (Z,W)$, because $x^n -f \in (z,w)$ and $K \subseteq (Z,W)$. Therefore $(F, Y, Z, W) = (X^n, Y, Z, W)$, so that $F, Y, Z, W$ is a system of parameters in $R$. We look at the canonical exact sequence
$$(E_2) \ \ \ \ 0 \to L \to R/(F, Y) \to A/(f,y) \to 0.$$ Then, because $z,w$ forms  a regular sequence in the two-dimensional Cohen-Macaulay ring $A/\a_1 = A/(f,y)$, from exact sequence $(E_2)$ we get the exact sequence 
$$0 \longrightarrow L/(Z,W)L \longrightarrow R/(F,Y,Z,W) \overset{\varepsilon}{\longrightarrow} A/(f,y,z,w) \longrightarrow 0,$$
in which the homomorphism $\varepsilon$ has to be an isomorphism, because 
$$\ell_R(R/(F,Y,Z,W)) = \ell_R(R/(X^n,Y,Z,W)) = n$$
and $$\ell_A(A/(f,y,z,w)) = \ell_A(A/(x^n,y,z,w)) = \ell_A(A/\c) = n.$$ Thus $L = (0)$ by Nakayama's lemma, so that $R/(F,Y) \cong A/(f,y).$
Hence $K= \Ker~\phi \subseteq (F,Y)$, so that $K \subseteq (F,Y) \cap (Z,W)$. Therefore we have  $$K = (F,Y) \cap (Z,W)$$ (recall that $(F,Y) \cap (Z,W) \subseteq K$, because $(f,y) \cap (z,w) = \a_1 \cap \a_2 = (0)$). Thus $$A \cong R/[(F,Y) \cap (Z,W)],$$ with $F = X^n + \xi$ for some $\xi \in (Z,W)$. This proves the implication $(1) \Rightarrow (2)$ in Theorem \ref{1.1} under the assumption that $A$ is $\m$--adically complete.
\end{proof}

\section{Affirmative cases}
Let $A$ be a Noetherian local ring with maximal ideal $\m$ and $d = \dim A >0$. In this section  we  study Problem \ref{ques} of whether $\e_Q^1(A)$ is independent of the choice of minimal reductions $Q$ of  $I$, where $I$ is an $\m$-primary ideal in $A$.

Let us begin with the following.

\begin{prop}\label{4.1}
Let $M$ be a finitely generated $A$-module with  $\operatorname{dim}_AM = s$ and let $Q$ and $Q'$ be parameter ideals for $M$ such that  $\overline{Q} = \overline{Q'}$ in $A$. Suppose that there exists an  exact sequence
$$0 \to L \to M \to M/L \to 0$$
of $A$-modules such that $L \ne (0)$, $\operatorname{dim}_AL < s$, and $M/L$ is a Cohen-Macaulay $A$-module. Then $$\e_Q^1(M) =\e_{Q'}^1(M).$$
\end{prop}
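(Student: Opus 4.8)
The plan is to exploit the given exact sequence $0 \to L \to M \to M/L \to 0$ (write $N = M/L$) in order to reduce the assertion to the invariance of Hilbert--Samuel multiplicity under replacing an ideal by a reduction. For each $n\ge0$, the filtration $Q^{n+1}M \subseteq Q^{n+1}M + L \subseteq M$ gives
$$\ell_A(M/Q^{n+1}M) = \ell_A\bigl(M/(Q^{n+1}M+L)\bigr) + \ell_A\bigl((Q^{n+1}M+L)/Q^{n+1}M\bigr),$$
and since $M/(Q^{n+1}M+L) \cong N/Q^{n+1}N$ and $(Q^{n+1}M+L)/Q^{n+1}M \cong L/(L\cap Q^{n+1}M)$, this reads
$$(\ast)\qquad \ell_A(M/Q^{n+1}M) = \ell_A(N/Q^{n+1}N) + \ell_A\bigl(L/(L\cap Q^{n+1}M)\bigr).$$
Because $N$ is Cohen--Macaulay of dimension $s$ and $Q$ is a parameter ideal for $N$, the ideal $Q$ is generated by an $N$-regular sequence, so $\e_Q^i(N)=0$ for $1 \le i \le s$ and $\ell_A(N/Q^{n+1}N) = \e_Q^0(N)\binom{n+s}{s}$ for $n \gg 0$. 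Set $g_Q(n) = \ell_A(M/Q^{n+1}M) - \ell_A(N/Q^{n+1}N)$, a numerical polynomial for $n\gg0$; by $(\ast)$, $g_Q(n) = \ell_A\bigl(L/(L\cap Q^{n+1}M)\bigr) \le \ell_A(L/Q^{n+1}L)$, so $\deg g_Q \le \dim_A L \le s-1$. Comparing with the Hilbert polynomial of $M$ then forces $\e_Q^0(M) = \e_Q^0(N)$ and
$$g_Q(n) = -\e_Q^1(M)\binom{n+s-1}{s-1} + (\text{terms of lower degree}) \qquad (n \gg 0);$$
in particular $-\e_Q^1(M)$ is the coefficient of $\binom{n+s-1}{s-1}$ in $g_Q$.

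Next I would pin down that coefficient by a sandwich argument. Note $\ell_A(L/QL) < \infty$, since $\Supp(L/QL) \subseteq \Supp(M/QM) \subseteq \{\m\}$; thus $\ell_A(L/Q^{n+1}L)$ agrees for $n\gg0$ with a numerical polynomial of degree $\dim_A L$, and its coefficient of $\binom{n+s-1}{s-1}$ is the Hilbert--Samuel multiplicity of $L$ with respect to $Q$ normalized to degree $s-1$, which I denote $\e(Q;L)$ --- this is $0$ if $\dim_A L \le s-2$ and is positive if $\dim_A L = s-1$. By Artin--Rees there is $c$ with $L\cap Q^{n+1}M = Q^{n+1-c}(L\cap Q^cM)\subseteq Q^{n+1-c}L$ for $n\ge c$, while $Q^{n+1}L \subseteq L\cap Q^{n+1}M$ for every $n$; hence
$$\ell_A(L/Q^{n+1-c}L) \le g_Q(n) \le \ell_A(L/Q^{n+1}L) \qquad (n\gg0).$$
If $\dim_A L \le s-2$, the two bounding polynomials have degree $\le s-2$, so $\deg g_Q \le s-2$ and therefore $\e_Q^1(M)=0$. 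If $\dim_A L = s-1$, the lower bound grows like a positive multiple of $n^{s-1}$, so $g_Q$ has degree exactly $s-1$; moreover both bounding polynomials have $\binom{n+s-1}{s-1}$-coefficient equal to $\e(Q;L)$, so the sandwich forces $g_Q$ to have the same $\binom{n+s-1}{s-1}$-coefficient, i.e. $\e_Q^1(M) = -\e(Q;L)$. The same reasoning applied to $Q'$ yields $\e_{Q'}^1(M) = -\e(Q';L)$ (again $\e_{Q'}^1(M)=0$ when $\dim_A L \le s-2$).

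Finally, since $\overline{Q} = \overline{Q'}$, both $Q$ and $Q'$ are reductions of $\overline{Q}$, and the Hilbert--Samuel multiplicity of a finitely generated module with respect to an ideal is unchanged when that ideal is replaced by a reduction (\cite{BH}); applied to the module $L$ this gives $\e(Q;L) = \e(\overline{Q};L) = \e(Q';L)$. Combining with the two cases above --- or noting that both sides vanish when $\dim_A L \le s-2$ --- we conclude $\e_Q^1(M) = \e_{Q'}^1(M)$.

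The length identity $(\ast)$, the vanishing of higher Hilbert coefficients for Cohen--Macaulay modules, and the reduction-invariance of multiplicity are all routine. The step that needs genuine care is the sandwich argument extracting the coefficient of $\binom{n+s-1}{s-1}$: one must be sure that $g_Q$, a priori only trapped between two numerical polynomials, actually attains the expected degree and that the trapping determines precisely that coefficient (not merely the top-degree term) --- which is exactly why separating the cases $\dim_A L = s-1$ and $\dim_A L \le s-2$ is worthwhile.
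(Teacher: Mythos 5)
Your proof is correct and follows essentially the same route as the paper: split $\ell_A(M/Q^{n+1}M)$ into the Cohen--Macaulay contribution of $M/L$ (a pure $\binom{n+s}{s}$ term) plus a correction of degree $\dim_A L \le s-1$, identify $\e_Q^1(M)$ with $-\e_Q^0(L)$ (or $0$), and finish with the reduction-invariance $\e_Q^0(L)=\e_{Q'}^0(L)$. The only difference is that the paper gets the exact value $\ell_A(L/Q^{n+1}L)$ for the correction term via the short exact sequence $0 \to L/Q^{n+1}L \to M/Q^{n+1}M \to (M/L)/Q^{n+1}(M/L) \to 0$ (exact since $M/L$ is Cohen--Macaulay), whereas you pin down only its leading coefficient by an Artin--Rees sandwich --- slightly more work, but it is all that is needed.
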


\begin{proof}
Passing to the ring $A/[(0):_A M]$, we may assume that $(0):_AM = (0)$, whence $s=d$ and both $Q$ and $ Q'$ are parameter ideals in $A$. Let $C = M/L$. Then, since  $C$ is a Cohen-Macaulay $A$-module with $\dim_AC = d$, we have the exact sequence
$$0 \to L/Q^{n+1}L \to M/Q^{n+1}M \to C/Q^{n+1}C \to 0$$
of $A$-modules, so that
\begin{eqnarray*}
(4) \ \ \ \
\ell_A(M/Q^{n+1}M) &=&\ell_A(C/Q^{n+1}C) +  \ell_A(L/Q^{n+1}L)\\
&=&\ell_A(C/QC)\binom{n+d}{d} + \ell_A(L/Q^{n+1}L)
\end{eqnarray*}
for $n \ge 0$. Let $t = \dim_AL$ and write
$$\ell_A(L/Q^{n+1}L) = \e_Q^0(L)\binom{n + t}{t} - \e_Q^1(L)\binom{n + t -1} {t-1} + \ldots + (-1)^t\e_Q^t(L)$$
for $n \gg 0$. We then have
\[ \e_Q^1(M)  =  \left\{
\begin{array}{ll}
- \e_Q^0(L) & \quad \mbox{if} \ \ t = d-1,
\vspace{4mm}\\
0 & \quad \mathrm{if} \ \ t < d-1.
\end{array}
\right.\]
Hence $\e_Q^1(M) =\e_{Q'}^1(M)$, because $e_Q^0(L)=e_{Q'}^0(L)$ once $\overline{Q} = \overline{Q'}$.
\end{proof}

Let $M$ be a finitely generated $A$-module. We say that $M$ is a sequentially Cohen-Macaulay $A$-module, if $M$ possesses a Cohen-Macaulay  filtration, that is a filtration  $$L_0 = (0) \subsetneq L_1 \subsetneq L_2 \subsetneq \ldots \subsetneq L_{\ell} = M$$ of $A$-submodules $\{L_i\}_{0 \le i \le \ell}$ of $M$ such that $\operatorname{dim}_AL_i > \operatorname{dim}_AL_{i-1}$ and $L_i/L_{i-1}$ is a Cohen-Macaulay $A$-module for all $1 \le i \le \ell$ (\cite{CC, GHS, Sch2, 
St}).

When $M$ is a sequentially Cohen-Macaulay $A$--module, applying Proposition \ref{4.1}, we readily get the following.

\begin{cor}\label{4.2}
Suppose that $M$ is a sequentially Cohen-Macaulay $A$-module with $\operatorname{dim}_AM > 0$ and let $Q$ and $Q'$ be parameter ideals for $M$. Then $\e_Q^1(M) = \e_{Q'}^1(M)$ if $\overline{Q} = \overline{Q'}$ in $A$.
\end{cor}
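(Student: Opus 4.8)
The plan is to reduce Corollary \ref{4.2} to Proposition \ref{4.1} by finding, inside the Cohen-Macaulay filtration of $M$, a single short exact sequence of the form required by the proposition. Let $L_0 = (0) \subsetneq L_1 \subsetneq \cdots \subsetneq L_\ell = M$ be a Cohen-Macaulay filtration, so that each quotient $L_i/L_{i-1}$ is Cohen-Macaulay and $\dim_A L_i$ strictly increases in $i$. Since $\dim_A M > 0$, we have $\ell \ge 1$ and $L_{\ell-1}$ is a proper submodule with $\dim_A L_{\ell-1} < \dim_A M = s$, while $M/L_{\ell-1} = L_\ell/L_{\ell-1}$ is Cohen-Macaulay of dimension $s$.

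First I would dispose of the degenerate case $L_{\ell-1} = (0)$: then $M = M/L_{\ell-1}$ is itself Cohen-Macaulay, and for a Cohen-Macaulay module of positive dimension one has $\e_Q^1(M) = 0$ for every parameter ideal $Q$ (indeed $\ell_A(M/Q^{n+1}M) = \e_Q^0(M)\binom{n+s}{s}$ exactly), so $\e_Q^1(M) = \e_{Q'}^1(M) = 0$ with no hypothesis on the integral closures needed. Otherwise $L := L_{\ell-1} \ne (0)$, and the short exact sequence $0 \to L \to M \to M/L \to 0$ satisfies all the hypotheses of Proposition \ref{4.1}: $L \ne (0)$, $\dim_A L < s$, and $M/L$ is Cohen-Macaulay. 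Applying Proposition \ref{4.1} directly (with the given condition $\overline{Q} = \overline{Q'}$ in $A$) yields $\e_Q^1(M) = \e_{Q'}^1(M)$, which is the assertion.

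I do not anticipate a genuine obstacle here, since the corollary is essentially a repackaging of the proposition; the only mildly delicate point is making sure the filtration actually produces a usable sequence. The key observation making this work is that in any Cohen-Macaulay filtration the \emph{top} quotient $L_\ell/L_{\ell-1}$ is automatically the unique one of maximal dimension $s = \dim_A M$ (because the dimensions strictly increase), so $M/L_{\ell-1}$ is Cohen-Macaulay of the full dimension $s$ and $\dim_A L_{\ell-1} < s$ — exactly the shape needed. One should also note that $Q, Q'$ are assumed to be parameter ideals for $M$, which is precisely the standing hypothesis of Proposition \ref{4.1}, so no further reduction (e.g.\ modding out $(0):_A M$) is needed beyond what that proof already performs internally.
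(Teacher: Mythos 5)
Your argument is correct and is exactly the intended one: the paper states the corollary as an immediate application of Proposition \ref{4.1}, obtained by taking $L = L_{\ell-1}$ from the Cohen-Macaulay filtration so that $\dim_A L < \dim_A M$ and $M/L = L_\ell/L_{\ell-1}$ is Cohen-Macaulay of full dimension. Your separate treatment of the degenerate case $L_{\ell-1}=(0)$ (where $M$ is Cohen-Macaulay and $\e_Q^1(M)=0$ outright) is a point the paper leaves implicit, but it is handled correctly.
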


Let us explore a typical example of sequentially Cohen-Macaulay rings.

\begin{ex}[\cite{G2}]
Let $R=k[[X,Y,Z]]$ be the formal power series ring over a field $k$. We look at the two-dimensional local ring $$A = R/[(X) \cap (Y,Z)].$$ Then $A$ is a sequentially Cohen-Macaulay ring but not Cohen-Macaulay. Let $x, y, z$ be the images of $X, Y, Z$ in $A$ respectively  and put $D = A/(y,z)$ and $B = A/(x)$. Then $D$ is a discrete valuation ring  and $B$ is a two-dimensional regular local ring. Let $Q =(a,b)$ be a parameter ideal in $A$. Then, since $a,b$ forms a $B$-regular sequence, thanks to the exact sequence $0 \to D \to A \to B \to 0,$
we get $$\ell_A(A/Q^{n+1}) = \e_{QB}^0(B)\binom{n+2}{2}+ \e_{QD}^0(D)\binom{n+1}{1}$$ for all $n \ge 0$, so that $$\e_Q^0(A) = \ell_B(B/QB), \ \ \e_Q^1(A) = -\e_{QD}^0(D), \ \ \text{and}\ \  \e_Q^2(A) = 0.$$ Therefore, if $Q'$ is a parameter ideal in $A$ with $\overline{Q'} = \overline{Q}$, we always have $\e_Q^i(A) = \e_{Q'}^i(A)$ for each $0 \le i \le 2$, because $QD = Q'D$.
\end{ex}

We now come to the main result of this section. 

\begin{thm}\label{ProjCM}
Let $I$ be an $m$-primary ideal in $A$ and assume that $(\sharp)$  the scheme $\operatorname{Proj} \calR (Q)$ is locally Cohen-Macaulay for every minimal reduction $Q$ of $I$. Then $\e_Q^1(A)$ is independent of the choice of minimal reductions $Q$  of $I$.
\end{thm}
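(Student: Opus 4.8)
The plan is to fix two minimal reductions $Q,Q'$ of $I$ and to prove $\e_Q^1(A)=\e_{Q'}^1(A)$; for this it suffices to express $\e_Q^1(A)$, for an arbitrary minimal reduction $Q$, through data depending only on $I$. First I would record the elementary reduction step. Since $Q$ is a reduction of $I$, the Rees algebra $\calR(I)$ is a module–finite extension of $\calR(Q)=A[Qt]$, so $\calR(I)/\calR(Q)=\bigoplus_{n\ge0}I^{n}/Q^{n}$ is a finitely generated graded $\calR(Q)$–module whose homogeneous components all have finite length. Comparing Hilbert polynomials in the exact sequences $0\to I^{n+1}/Q^{n+1}\to A/Q^{n+1}\to A/I^{n+1}\to0$ and using $\e_Q^0(A)=\e_I^0(A)$ gives
$$\e_Q^1(A)=\e_I^1(A)-\lambda_0(Q),$$
where $\lambda_0(Q)$ is the multiplicity of the $\calR(Q)$–module $\calR(I)/\calR(Q)$ (equivalently, the coefficient of $\binom{n+d-1}{d-1}$ in $\ell_A(I^{n+1}/Q^{n+1})$ for $n\gg0$). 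As $\e_I^1(A)$ is intrinsic to $I$, the theorem is equivalent to the assertion that $\lambda_0(Q)$ is the same for every minimal reduction $Q$ of $I$.

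The next step is to localize $\calR(I)/\calR(Q)$ at the primes that carry its multiplicity. Because $Q_\p=I_\p=A_\p$ for every prime $\p\ne\m$, the module $\calR(I)/\calR(Q)$ is supported only over the closed point; hence every prime in its support contains $\m\calR(Q)$, i.e.\ comes from the fibre cone $\mathrm F(Q)=\calR(Q)/\m\calR(Q)$, a graded ring of dimension $d$. The primes that contribute to $\lambda_0(Q)$ are therefore exactly the $d$–dimensional minimal primes of $\mathrm F(Q)$; equivalently, they are the generic points $\eta$ of the top–dimensional components of the exceptional fibre $E=\operatorname{Proj}\mathrm F(Q)$ of the scheme $X_Q:=\operatorname{Proj}\calR(Q)$, which are precisely the codimension–one points of $X_Q$ lying on $E$. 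By the associativity formula,
$$\lambda_0(Q)=\sum_{\eta}\,\ell_{\mathcal O_{X_Q,\eta}}\!\bigl((\calR(I)/\calR(Q))_{\eta}\bigr)\cdot e_{\eta},$$
the sum being over these finitely many $\eta$ and $e_\eta$ the multiplicity of the corresponding component of $E$. Up to this point $(\sharp)$ has not been used.

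Now I would bring in $(\sharp)$. Since $X_Q$ is locally Cohen--Macaulay it is in particular $S_2$ and has no embedded components, and because the exceptional divisor is cut locally by a non–zerodivisor, each $\mathcal O_{X_Q,\eta}$ (for $\eta$ as above) is a one–dimensional Cohen--Macaulay local ring; meanwhile $\calR(I)$, being finite over $\calR(Q)$ and agreeing with it away from $\m$, is a birational finite extension, so $\operatorname{Proj}\calR(I)\to X_Q$ is finite birational and $\calR(I)_\eta$ is an overring of $\mathcal O_{X_Q,\eta}$ in its total ring of fractions. The remaining task — and the step I expect to be the main obstacle — is to deduce from the Cohen--Macaulay property of $X_Q$ that the weighted sum of the local lengths $\ell_{\mathcal O_{X_Q,\eta}}(\calR(I)_\eta/\mathcal O_{X_Q,\eta})$ along $E$ is independent of the chosen minimal reduction $Q$. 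The natural route is to use that all minimal reductions of $I$ share the common integral closure $\overline I$: the normalized blow–up $\operatorname{Proj}\bigl(\bigoplus_{n\ge0}\overline{I^{\,n}}t^{\,n}\bigr)$ is a single ($Q$–independent) scheme that dominates every $X_Q$ by a finite birational morphism, and one wants the unmixed, $S_2$ nature of $X_Q$ to force the codimension–one length defects of $X_Q$ along $E$ — equivalently the quantity $\lambda_0(Q)$ — to be computed on that fixed scheme rather than on $Q$ itself. Equivalently, one aims at a clean formula $\e_Q^1(A)=\overline{\e}^1-\mu_0$, where $\overline{\e}^1$ is the first Hilbert coefficient of the ($Q$–independent) integral closure filtration $\{\overline{I^{\,n}}\}_n$ and $\mu_0$ is a defect read off the normalized blow–up; producing and justifying such a formula under hypothesis $(\sharp)$ is the heart of the argument.
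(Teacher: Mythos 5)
Your argument stops short of a proof. The identity $\e_Q^1(A)=\e_I^1(A)-\lambda_0(Q)$, with $\lambda_0(Q)$ the leading coefficient of $\ell_A(I^{n+1}/Q^{n+1})$, is correct, but it holds for any reduction $Q$ without hypothesis $(\sharp)$ and merely translates the theorem into the statement that $\lambda_0(Q)$ does not depend on $Q$; the subsequent localization via the associativity formula translates it once more without gaining anything, since the points $\eta$, the local rings $\mathcal{O}_{X_Q,\eta}$ and the lengths $\ell\bigl((\calR(I)/\calR(Q))_\eta\bigr)$ all still depend on $Q$. The step you yourself flag as ``the heart of the argument'' --- that under $(\sharp)$ these weighted length defects can be computed on the $Q$-independent normalized blow-up --- is exactly where the content of the theorem lies, and no argument is offered for it. Keep in mind that Example \ref{ex4.6} of the paper exhibits two minimal reductions of the same ideal with different $\e^1$, so $\lambda_0(Q)$ genuinely varies with $Q$ once $(\sharp)$ fails; any completion of your plan must therefore invoke $(\sharp)$ in an essential and so far unspecified way, and cannot follow from general facts about finite birational extensions alone.

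For comparison, the paper exploits $(\sharp)$ through a different mechanism, recorded in Proposition \ref{3.3}: if $\operatorname{Proj}\calR(Q)$ is locally Cohen-Macaulay, then \emph{every} system of generators of $Q$ is a superficial sequence for $A$ with respect to $Q$. This enables an induction on $d$ (the case $d=1$ being $\e_Q^1(A)=-\ell_A(\rmH_\m^0(A))$): given minimal reductions $Q=(a_1,\ldots,a_d)$ and $Q'=(b_1,\ldots,b_d)$, one interpolates through $(a_1,\ldots,a_{d-1},x)$ and $(b_1,\ldots,b_{d-1},x)$ for a suitable superficial $x\in I$ so as to assume $a_1=b_1=a$; superficiality of $a$ for both ideals (supplied by $(\sharp)$, not by a genericity choice) gives $\e_Q^1(A)=\e_{QB}^1(B)$ with $B=A/(a)$, up to the correction term $\ell_A((0):_Aa)$ common to $Q$ and $Q'$ when $d=2$; and the essential verification is that $(\sharp)$ descends to $J=I/(a)$ in $B$, which follows because $\gr_{\q}(B)$ is a quotient of $\gr_Q(A)/at\cdot\gr_Q(A)$ with finitely graded kernel and hence is again a generalized Cohen-Macaulay graded ring. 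To salvage your approach you would need an analogous descent statement for the modules $\calR(I)/\calR(Q)$, which does not appear to be any easier than the paper's route.
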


To prove Theorem \ref{ProjCM} we need the following. See \cite{G3} for the equivalence of the four conditions and also for the former part of the last assertion.

\begin{prop}[\cite{G3}]\label{3.3} Let $A$ be a Noetherian local ring with maximal ideal $\m$ and $d=\dim A >0$. Let $Q$ be a parameter ideal in $A$. We put  $\calR = \calR (Q)$ and $G = \gr_Q(A)$. Let $\M = \m \calR + \calR_+$ denote  the unique graded maximal ideal in $\calR$.  Then the following four conditions are equivalent.
\begin{enumerate}
\item[$(1)$] $\operatorname{Proj} \calR$ is a locally Cohen-Macaulay scheme.
\item[$(2)$] $\operatorname{Proj} G$ is a locally Cohen-Macaulay scheme.
\item[$(3)$] $\calR$ is a graded generalized Cohen-Macaulay ring, that is the graded local cohomology modules $\rmH_\M^i(\calR)$ of $\calR$  with respect to $\M$  are finitely generated for all $i \ne d+1$.
\item[$(4)$] $G$ is a graded generalized Cohen-Macaulay ring, that is the graded local cohomology modules $\rmH_\M^i(G)$ of $G$ with respect to $\M$ are finitely generated for all $i \ne d$.
\end{enumerate}
When this is the case,  $A$ is a generalized Cohen-Macaulay ring and every system $a_1, a_2, \ldots, a_d$ of generators of $Q$ forms a superficial sequence for $A$ with respect to $Q$.
\end{prop}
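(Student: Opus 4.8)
The plan is to reduce every one of $(1)$--$(4)$ to a finiteness statement for graded local cohomology, to prove the two ``abstract'' equivalences $(1)\Leftrightarrow(3)$ and $(2)\Leftrightarrow(4)$ by the Grothendieck--Serre correspondence together with graded local duality, and then to close the loop and extract the two concluding assertions by chasing the canonical exact sequences that link $\calR=\calR(Q)$, $G=\gr_Q(A)$ and $A$; the content quoted as \cite{G3} is essentially the first of these steps. Since none of the four conditions, nor the ``moreover'' statements, is affected by replacing $A$ with its $\m$-adic completion (local cohomology commutes with completion, $\calR(Q\widehat{A})=\widehat{A}\otimes_A\calR(Q)$, and Cohen--Macaulayness ascends and descends suitably under this faithfully flat base change), I would first assume $A$ is complete, hence a quotient of a regular local ring, so that $\calR$ and $G$ are graded quotients of polynomial rings over a regular local ring and in particular have graded canonical modules. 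The equalities $\dim\calR=d+1$ and $\dim G=d$ (valid because $Q$ is $\m$-primary and $d>0$) will be used repeatedly.

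Step 1: the dictionary. I would isolate and prove the following. For a Noetherian $\Z_{\ge0}$-graded ring $S$ generated over a Noetherian local ring $(S_0,\n)$ by $S_1$, with graded maximal ideal $\mathfrak{N}=\n S+S_+$ and $\delta=\dim S$, the scheme $\operatorname{Proj} S$ is locally Cohen--Macaulay if and only if $\H^i_{\mathfrak{N}}(S)$ is a finitely generated $S$-module for every $i\ne\delta$; and since $\H^i_{\mathfrak{N}}(S)$ is $\mathfrak{N}$-torsion, this is the same as saying it has finite length. To prove it, reduce to $S$ complete, so $S$ is a graded quotient of a graded Gorenstein ring $S'$ of dimension $n'$; graded local duality gives $\H^i_{\mathfrak{N}}(S)^{\vee}\cong\Ext_{S'}^{n'-i}(S,S')$ up to a twist, and the right-hand side has finite length exactly when its support is $\{\mathfrak{N}\}$. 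Matching this against the standard description of the non-Cohen--Macaulay locus of $\operatorname{Spec} S$ by means of these $\Ext$-modules, one finds that $\H^i_{\mathfrak{N}}(S)$ is of finite length for all $i\ne\delta$ if and only if $S_P$ is Cohen--Macaulay for every homogeneous prime $P\ne\mathfrak{N}$. For $S=G$ this is literally ``$\operatorname{Proj} G$ locally Cohen--Macaulay'', because $G_0=A/Q$ is Artinian and so $\mathfrak{N}$ is the only homogeneous prime containing $G_+$. For $S=\calR(Q)$ the homogeneous primes $P\ne\mathfrak{N}$ that contain $\calR(Q)_+$ are precisely the $\p\calR(Q)+\calR(Q)_+$ with $\p\ne\m$, at which $\calR(Q)_P$ is Cohen--Macaulay iff $A_\p$ is (because $QA_\p=A_\p$), and this in turn is implied by $(1)$ since $\operatorname{Proj}\calR(Q)\to\operatorname{Spec} A$ is an isomorphism off $\m$; so the condition again unwinds to $(1)$. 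Applying the lemma with $(S,\delta)=(\calR(Q),d+1)$ and $(S,\delta)=(G,d)$ yields $(1)\Leftrightarrow(3)$ and $(2)\Leftrightarrow(4)$.

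Step 2: closing the loop and the ``moreover'' statements. To link $\calR(Q)$ with $G$ I would chase the long exact sequences in $\H^\bullet_{\mathfrak{N}}$ arising from the canonical short exact sequences of graded $\calR(Q)$-modules
$$0\longrightarrow Q\calR(Q)\longrightarrow\calR(Q)\longrightarrow G\longrightarrow 0,\qquad 0\longrightarrow\calR(Q)_+\longrightarrow\calR(Q)\longrightarrow A\longrightarrow 0,$$
using the abstract isomorphism $Q\calR(Q)\cong\calR(Q)_+(1)$ (induced by $a_it\mapsto a_i$) and $\H^i_{\mathfrak{N}}(A)=\H^i_\m(A)$, together with the fact that a graded submodule, quotient or extension of finitely generated graded modules over a Noetherian ring is again finitely generated, and the vanishing $\H^i_{\mathfrak{N}}(G)=0=\H^i_{\mathfrak{N}}(\calR(Q))$ above degrees $d$ and $d+1$ respectively. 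Running the chase shows $(3)\Leftrightarrow(4)$ and, when they hold, that $\H^i_\m(A)$ is finitely generated for all $i\ne d$, i.e. $A$ is a generalized Cohen--Macaulay ring. (It is convenient to record first that $\m G$ is nilpotent, since $\m^k\subseteq Q$ for $k\gg0$ forces $\m^k G=0$, so that $\mathfrak{N}$ and $G_+$ are cofinal and $\H^i_{\mathfrak{N}}(G)=\H^i_{G_+}(G)$.) For the last assertion, observe that since $Q=(a_1,\ldots,a_d)$ is a parameter ideal the initial forms $a_1^{*},\ldots,a_d^{*}\in G_1$ generate $G_+$ and $\dim G/(a_1^{*},\ldots,a_d^{*})G=\dim A/Q=0$, so they form a homogeneous system of parameters of the $d$-dimensional ring $G$; in a graded generalized Cohen--Macaulay ring every homogeneous system of parameters is a filter-regular sequence in any order, and the standard translation between superficiality in $A$ and filter-regularity of initial forms in $G$ then shows that $a_1,\ldots,a_d$ is a superficial sequence for $A$ with respect to $Q$ in any order.

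The genuine obstacle is Step 1, namely proving that ``$\operatorname{Proj} S$ is locally Cohen--Macaulay'' is equivalent to ``$\H^i_{\mathfrak{N}}(S)$ has finite length for all $i\ne\dim S$''. This requires the Grothendieck--Serre correspondence in its shifted form $\H^i(\operatorname{Proj} S,\calO(n))\cong\H^{i+1}_{S_+}(S)_n$ for $i\ge1$, the composition spectral sequence $\H^p_{\n S}(\H^q_{S_+}(S))\Rightarrow\H^{p+q}_{\mathfrak{N}}(S)$ for $\mathfrak{N}=\n S+S_+$, and -- after reduction to the complete case -- graded local duality together with a careful analysis of the non-Cohen--Macaulay locus, including the equidimensionality bookkeeping that guarantees only $\mathfrak{N}$ is permitted in it. Once that dictionary is in hand, the exact-sequence chase of Step 2 and the concluding superficial-sequence statement are routine.
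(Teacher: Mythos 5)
First, a point of calibration: the paper does not prove the equivalence of conditions $(1)$--$(4)$, nor the assertion that $A$ is a generalized Cohen--Macaulay ring; both are quoted from \cite{G3}. The only thing the paper actually proves is the final claim about superficial sequences, and there your argument coincides with the paper's: in a graded generalized Cohen--Macaulay ring every homogeneous system of parameters forms a filter-regular sequence with respect to $\M$, and applying this to the linear forms $\overline{a_it}$ translates directly into superficiality of $a_1, a_2, \ldots, a_d$. So for the portion that the paper proves, your Step 2 is essentially the same proof.

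For the portion you attempt beyond the paper, there is a genuine gap in Step 1, and it sits exactly where you park it as ``bookkeeping''. The dictionary you assert --- $\operatorname{Proj} S$ locally Cohen--Macaulay if and only if $\rmH_{\mathfrak{N}}^i(S)$ is finitely generated for all $i \ne \dim S$ --- is false for a general graded ring $S$: by Faltings' finiteness criterion the correct condition for finite generation below $\dim S$ is $\operatorname{depth} S_P + \dim S/P \ge \dim S$ for all relevant $P \ne \mathfrak{N}$, that is, Cohen--Macaulayness of $S_P$ \emph{together with} the dimension equality $\dim S_P + \dim S/P = \dim S$. Cohen--Macaulayness off the closed point alone does not give this when the ring fails to be equidimensional; the ring $k[[X,Y,Z]]/[(X)\cap (Y,Z)]$, which appears in Section 4 of this very paper, is regular off its closed point yet is not generalized Cohen--Macaulay. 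Since the Proposition is stated for an arbitrary Noetherian local ring $A$ with no unmixedness hypothesis, the implications $(1)\Rightarrow(3)$ and $(2)\Rightarrow(4)$ require an argument that condition $(1)$ (resp. $(2)$) forces $\calR$ (resp. $G$) to be equidimensional --- for instance, that every irreducible component of $\operatorname{Proj}\calR$ meets the exceptional fibre and that local equidimensionality at Cohen--Macaulay points can be propagated along a connected chain of components. You flag this as the hard point but never supply it, and it is the real content of that direction. A smaller issue of the same kind: the opening reduction to the complete case is not innocent for conditions $(1)$ and $(2)$, since ascent of Cohen--Macaulayness along $A \to \widehat{A}$ requires Cohen--Macaulay formal fibres, which a general Noetherian local ring need not have.
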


\begin{proof}Let us briefly explain the reason why the latter part in the last assertion holds true. Suppose that $\operatorname{Proj} \calR$ is a locally Cohen-Macaulay scheme. Then, since $\rmH_\M^i(G)$ are finitely generated for all $i \ne d$, every homogeneous system $g_1, g_2, \ldots, g_d$ of parameters in $G$ forms a filter regular sequence with respect to $\M$  (\cite{STC}), that is the graded $G$-modules $$[(g_1, g_2, \ldots, g_{i-1}) :_{G} g_i]/(g_1, g_2, \ldots, g_{i-1})$$ are finitely graded for all $1 \le i \le d$. Applying this observation to the linear system $f_1 = \overline{a_1t}, f_2 = \overline{a_2t}, \ldots, f_d = \overline{a_dt}$ of parameters in  $G$  where $\overline{a_it}$ denotes the image of $a_it$ in $G$,  we readily get by definition that  $a_1, a_2, \ldots. a_d$ is a superficial sequence for  $A$ with respect to $Q$. 
\end{proof}

Let us note the following also, which we need to explore Example \ref{3.5}.  See \cite[Proof of Theorem (1.1)]{G3} for the proof.

\begin{prop}[\cite{G3}]\label{3.4} Suppose that $A$ is a generalized Cohen-Macaulay ring with $\dim A = d$ and $\depth A > 0$. Let $Q$ be a parameter ideal in $A$.  Assume that the residue class field $A/\m$ of $A$ is algebraically closed. Then $\operatorname{Proj} \calR (Q)$ is a locally Cohen-Macaulay scheme, if every system $a_1, a_2, \ldots, a_d$ of generators for $Q$ forms a $d$--sequence in $A$.
\end{prop}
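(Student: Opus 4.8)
The plan is to reduce, via Proposition~\ref{3.3}, to a statement about $G:=\gr_Q(A)$, and then to combine the algebraic closedness of $k:=A/\m$ with the structure theory of associated graded rings of ideals generated by $d$-sequences. By the equivalence $(1)\Leftrightarrow(2)$ of Proposition~\ref{3.3} it suffices to prove that $\operatorname{Proj} G$ is locally Cohen-Macaulay. The decisive preliminary observation is that, as a topological space, $\operatorname{Proj} G$ is simply $\operatorname{Proj}$ of a polynomial ring over $k$: since $Q$ is $\m$-primary we have $\m^r\subseteq Q$ for $r\gg 0$, so $\m G$ is nilpotent, and together with $Q^{n+1}\subseteq\m Q^n$ this gives $G/\m G=\bigoplus_{n\ge 0}Q^n/\m Q^n$, the fibre cone of $Q$, which is the polynomial ring $k[\xi_1,\dots,\xi_d]$ because $Q$ has analytic spread $d$ (here $\xi_i\in G_1$ is the leading form of $a_i$). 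Since $G$ is a finitely generated algebra over the Artinian---hence excellent---local ring $A/Q$, it is itself excellent, so its Cohen-Macaulay locus is open; as the underlying space of $\operatorname{Proj} G$ is Jacobson (being homeomorphic to that of $\operatorname{Proj} (k[\xi_1,\dots,\xi_d])$), it is therefore enough to check that $G_P$ is Cohen-Macaulay for every closed point $P$ of $\operatorname{Proj} G$.

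Here I would use that $k$ is algebraically closed: every closed point of $\operatorname{Proj} (k[\xi_1,\dots,\xi_d])$ is then $k$-rational, and $\mathrm{GL}_d(k)$ acts transitively on these points. A linear substitution of the $\xi_i$ is induced by replacing $a_1,\dots,a_d$ by an invertible $k$-linear recombination $a_1',\dots,a_d'$, which is again a system of generators of $Q$, leaves $G$ unchanged, and is again a $d$-sequence by hypothesis; hence it suffices to treat the single closed point $P=\m G+(\xi_2,\dots,\xi_d)G$, under the assumption that $a_1,\dots,a_d$ generate $Q$ and form a $d$-sequence. Since $\m G$ is nilpotent, $\sqrt{(\xi_2,\dots,\xi_d)G}=P$, so $(\xi_2,\dots,\xi_d)G_P$ is $PG_P$-primary and $\dim G_P\le d-1$; thus it is enough to show that $\xi_2,\dots,\xi_d$ is a regular sequence on $G_P$, which then forces $\operatorname{depth} G_P=\dim G_P=d-1$ and hence that $G_P$ is Cohen-Macaulay.

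This last point is the technical heart of the argument, and it is where the $d$-sequence hypothesis is genuinely used. The plan is to invoke the theory of $d$-sequences and associated graded rings of Huneke and Valla (see \cite{H}): if $a_1,\dots,a_d$ is a $d$-sequence generating $Q$---and it is one in every order, since every system of generators of $Q$ is assumed to be a $d$-sequence---then the leading forms $\xi_1,\dots,\xi_d$ form a filter-regular sequence on $G$ with respect to the graded maximal ideal $\M:=\m G+G_+$, the relevant torsion subquotients $[(\xi_1,\dots,\xi_{i-1}):_G\xi_i]/(\xi_1,\dots,\xi_{i-1})$ being governed by the local cohomology modules $\rmH_\m^j(A)$ with $j<d$, which have finite length because $A$ is generalized Cohen-Macaulay. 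Localizing at the non-maximal prime $P$ then annihilates these finite-length modules, so the appropriate reordering $\xi_2,\dots,\xi_d$ of part of such a sequence becomes a regular sequence on $G_P$, as required. I expect that deriving this filter-regularity from the $d$-sequence condition will be the main obstacle: it needs a careful induction on $d$---the base case $d=1$ being immediate, since then $\operatorname{depth} A=\dim A=1$ and $A$ is Cohen-Macaulay---together with the Huneke--Valla description of the defining relations of $G$, and it is carried out in \cite[Proof of Theorem (1.1)]{G3}. (Alternatively, one avoids the reduction to a single closed point altogether by establishing directly that the leading forms of a generating $d$-sequence form a filter-regular sequence on $G$ with respect to $\M$ and quoting the standard criterion that this makes $\rmH_\M^i(G)$ of finite length for $i\ne d$; this is in effect the route taken in \cite[Proof of Theorem (1.1)]{G3}.)
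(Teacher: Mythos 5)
The paper does not actually prove this proposition: it states it with the remark ``See \cite[Proof of Theorem (1.1)]{G3} for the proof,'' so there is no in-paper argument to compare yours against step by step. Your reconstruction is sound and, as far as one can tell, is an expanded version of the intended argument. The outer reductions are all correct and well justified: the equivalence with $\operatorname{Proj} G$ being locally Cohen--Macaulay is exactly Proposition~\ref{3.3}; the observation that $\m G$ is nilpotent and $G/\m G$ is the polynomial ring $k[\xi_1,\dots,\xi_d]$ is right; openness of the Cohen--Macaulay locus (excellence over the Artinian ring $A/Q$) plus the Jacobson property legitimately reduces the problem to closed points; and the $\mathrm{GL}_d(k)$-transitivity step is precisely where the two hypotheses enter --- algebraic closedness to make all closed points $k$-rational, and the requirement that \emph{every} system of generators of $Q$ be a $d$-sequence to ensure the recombined generators $a_1',\dots,a_d'$ still satisfy the hypothesis. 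The Krull-height bound $\dim G_P\le d-1$ and the conclusion from a regular sequence of length $d-1$ are also correct. The one substantive step you do not prove --- that the $d$-sequence property of all generating systems forces the relevant colon modules $[(\xi_{i_1},\dots,\xi_{i_{j-1}}):_G\xi_{i_j}]/(\xi_{i_1},\dots,\xi_{i_{j-1}})$ to be $\M$-torsion, so that $\xi_2,\dots,\xi_d$ becomes a regular sequence after localizing at $P\subsetneq\M$ --- is exactly the step the paper itself outsources to \cite[Proof of Theorem (1.1)]{G3}, and you cite the same source; just be aware that you need the filter-regularity for the sequence beginning at $\xi_2$ (i.e.\ in a permuted order), which is why the ``in every order'' strength of the hypothesis is indispensable there as well. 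So your proposal contains no gap beyond the deferral that the paper itself makes.
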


We are in a position to prove Theorem \ref{ProjCM}.

\begin{proof}[Proof of Theorem $\ref{ProjCM}$] 
Since $\e_Q^1(A) = - \ell_A(\rmH_\m^0(A))$ if $d = 1$ (\cite[Lemma 2.4 (1)]{GNi}), we may assume that $d > 1$ and that our assertion holds true for $d-1$. 
Let $Q = (a_1, a_2, \ldots, a_d)$ and $Q' = (b_1, b_2, \ldots, b_d)$ be two reductions of $I$ such that $a_1$ and $b_1$ are superficial for $A$ with respect to $I$. We want to show $\e_Q^1(A) = \e_{Q'}^1(A)$. To do this, let us choose an element $x \in I$ such that $x$ is superficial for $A$ with respect to $I$ and both the ideals $Q_x=(a_1, a_2, \ldots, a_{d-1}, x)$ and $Q'_x = (b_1, b_2, \ldots, b_{d-1}, x)$ are reductions of $I$. Then, in order to show $\e_Q^1(A)= \e_{Q'}^1(A)$, comparing the following four reductions of $I$
\begin{eqnarray*}
Q&=&(a_1, a_2, \ldots, a_{d-1}, a_d),\\
Q_x&=&(a_1, a_2, \ldots, a_{d-1}, x),\\
Q'_x&=&(b_1, b_2, \ldots, b_{d-1}, x),\\
Q'&=&(b_1, b_2, \ldots, b_{d-1}, b_{d}),
\end{eqnarray*} 
we may assume without loss of generality that $a=a_1 = b_1$.

We put  $B = A/(a)$ and  $J = I/(a)$. Then both $QB$ and  $Q'B$ are reductions of $J$. Because  $a$ is superficial for $A$ with respect to both the ideals $Q$ and $Q'$ by Proposition \ref{3.3}, we get 
\[ \rme_Q^{1}(A) =  \left\{
\begin{array}{ll}
\rme_{QB}^{1}(B) & \quad \mbox{if} \ \ d > 2,
\vspace{4mm}\\
\rme_{QB}^{1}(B)  + \ell_A((0):_Aa) & \quad \mathrm{if} \ \ d=2
\end{array}
\right.\]
and
\[ \rme_{Q'}^{1}(A) =  \left\{
\begin{array}{ll}
\rme_{Q'B}^{1}(B) & \quad \mbox{if} \ \ d > 2,
\vspace{4mm}\\
\rme_{Q'B}^{1}(B)  + \ell_A((0):_Aa) & \quad \mathrm{if} \ \ d=2,
\end{array}
\right.\]
which  shows, in order to see $\e_Q^1(A)= \e_{Q'}^1(A)$, it suffices to check condition $(\sharp)$ is also satisfied for the ideal $J$ in $B$.

Let $\q = (\alpha_2, \alpha_3, \ldots, \alpha_d)$ be a reduction of $J$ and let $\alpha_i = \cl{c_i}$ ($c_i \in I$), where $\cl{*}$ denotes the image in $B$. We look at the parameter ideal $Q=(c_1, c_2, \ldots , c_d)$ in $A$ with $c_1 =a$. Then, because  $J^{n+1} = \q J^n$
for all $n \gg 0$, we have $I^{n+1} \subseteq Q I^n + (a),$ so that  
$$I^{n+1}=QI^n + a[I^{n+1} : a]$$
for all $n \gg 0$.
Therefore, since $I^{n+1}  : a = I^n + [(0):a]$ if $n \gg 0$, we  have $I^{n+1} = QI^n$ for all $n \gg 0$, so that $Q$ is a reduction of $I$. We now notice that by Proposition \ref{3.3}  $c_1$ is  a  superficial element  for $A$ with respect to $Q$, because the scheme $\operatorname{Proj} \mathcal{R}(Q)$ is locally Cohen-Macaulay. Therefore, the kernel of the canonical epimorphism $$\varphi : \gr_Q(A)/c_1t{\cdot}\gr_Q(A) \to \gr_{\q}(B)$$ of graded rings is 
finitely graded, so that $\gr_{\q}(B)$ is a graded generalized Cohen-Macaulay ring, because so is the graded ring $\gr_Q(A)$. Therefore $\operatorname{Proj} \calR (\q)$ is  a locally Cohen-Macaulay scheme  by Proposition \ref{3.3}. Thus condition $(\sharp)$ is satisfied for the ideal $J$ in $B$, which completes the proof of Theorem \ref{ProjCM}.
\end{proof}

Let us explore an example which satisfies  condition $(\sharp)$  in Theorem \ref{ProjCM}.

\begin{ex}\label{3.5}
We look at the local ring 
$$A = k[[X,Y,Z,W]]/[(X,Y)^2 \cap (Z,W)],$$
where $k[[X,Y,Z,W]]$ is the formal power series ring over an algebraically closed  field $k$. Then $\dim A = 2$, $\depth A = 1$, and $A$ is a generalized Cohen-Macaulay ring possessing $$\rmH_\m^1(A) \cong A/[(x,y)^2 + (z,w)],$$ where $x, y, z, w$ denotes the image of $X, Y, Z, W$ in $A$ respectively. For this ring $A$,  every $a,b \in A$ such that $Q = (a,b)$ is a reduction of $\m$ forms a $d$--sequence in $A$, so that $\operatorname{Proj} \calR (Q)$ is locally Cohen-Macaulay. We have 
$$\e_Q^0(A) = 4,\ \  \e_Q^1(A) = -2, \ \ \text{and}\ \ \e_Q^2(A) = 0$$
for every minimal reduction $Q$ of $\m$. Hence $\Lambda (\m) = \{-2\}$.  The maximal ideal $\m$ of $A$ is not  standard (\cite[Corollary 3.7]{T}), because $\m \rmH_\m^1(A) \ne (0).$ 
\end{ex}

\begin{proof}
Let $a, b \in A$ and assume that $Q = (a,b)$ is a reduction of $\m$. Since $\depth A >0$, $a$ and $b$ are non-zerodivisors in $A$. First of all, we will show that $a,b$ is a $d$--sequence in $A$. Let $\cl{A} = A/(z,w)$. Then we have $\m = (a, b) + (z,w)$, because $\cl{A}$ is a regular local ring of dimension $2$ and $Q\cl{A}$ is a reduction of the maximal ideal $\m \cl{A}$ in $\cl{A}$. Let $\m_C$ denote the maximal ideal in $C = A/[(x,y)^2 + (z,w)]$. Then $\m_C = (\cl{a}, \cl{b})$, where $\cl{*}$ denotes the image in $C$. Because $\mu_C(\m_C) = 2$, we have  $\cl{a} \ne 0$, so that $(0) :_C a = \m_C$ and $b[(0):_Ca] =(0)$ (notice that $\m_C^2 = (0)$), while $$\rmU (a)/(a) = \rmH_\m^0(A/(a)) \cong (0) :_{\rmH_\m^1(A)} a \cong (0):_C a.$$ Hence $b[(a) : b^2] \subseteq b\rmU (a) \subseteq (a).$ Thus, for a given minimal reduction $Q$ of $\m$, every system $a,b$ of generators for the ideal $Q$ forms a $d$--sequence in $A$, which implies $\Proj \calR (Q)$ is a locally Cohen-Macaulay scheme by Proposition \ref{3.5}, because the ground field $k$ is algebraically closed. We also have 
$$\e_Q^0(A) = 4,\ \  \e_Q^1(A) = -2, \ \ \text{and}\ \ \e_Q^2(A) = 0$$ by Proposition \ref{GhHV} (1), since $(0) :_C a = \m_C$ and $b[(0):_Ca] =(0)$ (notice that $T_n = [(0):_Ca]^{n+1}$ for all $n \ge 0$ in the present case). 

\end{proof}


\section{A negative case and counterexamples}

In this section we study a negative case and give counterexamples.

Let us begin with the following.

\begin{thm}\label{thm4.5}
Suppose that $A$ is a generalized Cohen-Macaulay ring with $d = \dim A \ge 2$, $\depth A \ge 1$, and infinite residue class field. Let $Q$ be a standard parameter ideal in $A$ and put $I = \cl{Q}$. If $I$ is not standard, then there exists at least one minimal reduction $Q'$ of $I$ such that  $$0 > \e_{Q'}^1(A) > \e_Q^1(A) = - \sum_{j=1}^{d-1}\binom{d-2}{j-1}\mathrm{h}^j(A),$$
where $\mathrm{h}^j(A) = \ell_A(\rmH_\m^j(A))$ for each $1 \le j \le d-1$.
\end{thm}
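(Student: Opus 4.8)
The plan is to reduce to dimension two and then use the explicit description of $\e_Q^1$ given in Section~2. First I would verify the claimed formula $\e_Q^1(A) = -\sum_{j=1}^{d-1}\binom{d-2}{j-1}\mathrm{h}^j(A)$ for a standard parameter ideal $Q$: since $Q$ is standard the generators form a strong $d$-sequence in any order, so cutting by a superficial element one controls how $\e_Q^1$ changes at each step via \cite[Lemma 2.4]{GNi}, and an induction on $d$ together with the standard computation of the finite local cohomologies of a generalized Cohen--Macaulay ring yields the binomial sum. The key point for the strict inequality is that $Q$ standard means $Q$ realizes the extremal (largest, i.e.\ closest to zero from below) value of $\e_Q^1$ among parameter ideals, by the results of \cite{GhGHOPV1, GO1, GO2, GO3}.

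Next I would exploit the hypothesis that $I = \cl{Q}$ is \emph{not} standard. By definition this means there is some parameter ideal contained in $I$ which fails to be standard; since $A/\m$ is infinite one can arrange such a bad ideal to be a minimal reduction $Q'$ of $I$ (using prime avoidance to keep superficiality while moving inside the non-standard locus). The goal is then to show $\e_{Q'}^1(A) > \e_Q^1(A)$, i.e.\ that the non-standard minimal reduction has strictly smaller length defect. I would set this up by passing to the ring modulo a maximal regular sequence of superficial elements common to $Q$ and $Q'$, reducing to the case $\dim A = 2$, $\depth A = 1$, where \ref{GhHV} applies: writing $C \cong \rmH_\m^1(A)$ (more precisely the cokernel $C$ of $A \hookrightarrow B$ into its Cohen--Macaulayfication), we have $\e_{Q'}^1(A) = -\ell_A(T_0') = -\ell_A((0):_C b')$-type bounds, squeezed between $-\ell_A(C)$ and $-\ell_A((0):_C Q')$. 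The standard ideal $Q$ satisfies $QC = (0)$, so $\e_Q^1(A) = -\ell_A(C)$, which is the minimum; it remains to produce $Q'$ with $Q'C \neq (0)$, forcing $\e_{Q'}^1(A) > -\ell_A(C)$ while still $\e_{Q'}^1(A) < 0$ since $A$ is not Cohen--Macaulay.

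The main obstacle, I expect, is the reduction step: one must check that ``$I$ not standard'' survives passage to $B = A/(x)$ for a generic superficial $x$ and that a bad minimal reduction of $I$ descends to a bad minimal reduction of $I/(x)$ — essentially the converse direction of the argument used in the proof of Theorem~\ref{ProjCM}. Concretely, the delicate part is guaranteeing the existence of a single minimal reduction $Q'$ of $I$ with $Q'C \neq (0)$: standardness of $\m^\ell$ for $\ell \gg 0$ means most reductions are standard, so one needs the failure of standardness of $I$ itself to be detected by at least one minimal reduction, which I would extract from the characterization of standard ideals via $d$-sequences (\cite{T, STC}) together with a non-vanishing statement $\m \rmH_\m^1(A) \not\subseteq$ (the relevant annihilator). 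Once the dimension-two case is in hand, the passage back up is routine via the superficial-element formulas for $\e^1$ already recorded in the proof of Theorem~\ref{ProjCM}, and the strict inequalities $0 > \e_{Q'}^1(A) > \e_Q^1(A)$ follow from the sandwich in Proposition~\ref{GhHV}(2).
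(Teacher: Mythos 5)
Your proposal has the right global picture in its second paragraph --- the standard parameter ideal $Q$ attains the \emph{minimal} possible value $-\sum_{j=1}^{d-1}\binom{d-2}{j-1}\mathrm{h}^j(A)$ of $\e^1$ among parameter ideals of a generalized Cohen--Macaulay ring, and any non-standard reduction must lie strictly above it --- but this contradicts your first paragraph, where you assert that $Q$ standard realizes the \emph{largest} value, ``closest to zero from below''; that would make the desired conclusion $\e_{Q'}^1(A) > \e_Q^1(A)$ impossible. The extremality goes the other way: by \cite[Lemma 2.4]{GNi} and \cite[Theorem 2.1]{GO2} one has $\e_{\q}^1(A) \ge -\sum_{j=1}^{d-1}\binom{d-2}{j-1}\mathrm{h}^j(A)$ for \emph{every} parameter ideal $\q$, with equality exactly when $\q$ is standard. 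Once this is stated correctly, the reduction to dimension two becomes unnecessary: the displayed formula for $\e_Q^1(A)$ is \cite[Korollar 3.2]{Sch1}, the strict lower bound for a non-standard $Q'$ is the equality criterion just quoted, and $0 > \e_{Q'}^1(A)$ is \cite[Theorem 2.1]{GhGHOPV1}. Your proposed descent (cutting by superficial elements ``common to $Q$ and $Q'$'' and tracking standardness of $I$ modulo a generic element) is both not needed and shaky as set up: two distinct minimal reductions need not share $d-2$ superficial generators, and you give no argument that non-standardness of $I$ survives the specialization.

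The genuine gap is the step you yourself flag as delicate: producing a minimal reduction $Q'$ of $I$ that fails to be standard. The hypothesis ``$I$ is not standard'' only gives you \emph{some} non-standard parameter ideal contained in $I$, which need not be a reduction of $I$, and ``prime avoidance to keep superficiality while moving inside the non-standard locus'' does not manufacture one. The paper's mechanism is concrete: since the residue field is infinite, write $I=(x_1,\dots,x_\ell)$ with the generators in sufficiently general position that every $d$-element subset $x_{i_1},\dots,x_{i_d}$ generates a reduction of $I$; by \cite[Proposition 3.2]{T}, $I$ is standard if and only if every such subset generates a standard parameter ideal, so non-standardness of $I$ forces some subset $Q'=(x_{i_1},\dots,x_{i_d})$ to be non-standard, and it is automatically a minimal reduction of $I$ with $\cl{Q'}=\cl{I}=\cl{Q}$. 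Without this (or an equivalent) device the argument does not get off the ground, since everything downstream depends on having such a $Q'$ in hand.
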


\begin{proof}
We have $\e_Q^1(A) = - \sum_{j=1}^{d-1}\binom{d-2}{j-1}\mathrm{h}^j(A)$ by \cite[Korollar 3.2]{Sch1}, since $Q$ is a standard parameter ideal in $A$. 
Let $\ell = \mu_A(I)$ and write $I = (x_1, x_2, \ldots, x_{\ell})$ so that every  $d$ elements $x_{i_1}, x_{i_2}, \ldots, x_{i_d}$~$(1\le i_1  < i_2 < \ldots < i_d \le \ell)$ generate a reduction of $I$. Then, since $I$ is not standard,  for some $1\le i_1  < i_2 < \ldots < i_d \le \ell$ the ideal $Q' = (x_{i_1}, x_{i_2}, \ldots, x_{i_d})$ cannot be standard (\cite[Proposition 3.2]{T}). Therefore $\cl{Q'} = I = \cl{Q}$ but 
$$\e_{Q'}^1(A) >  - \sum_{j=1}^{d-1}\binom{d-2}{j-1}\mathrm{h}^j(A)$$
by \cite[Lemma 2.4]{GNi}) and \cite[Theorem 2.1]{GO2}. See \cite[Theorem 2.1]{GhGHOPV1} for the inequality $0 > \e_{Q'}^1(A)$. 
\end{proof}

Let us construct counterexamples. The main examples are the following.

\begin{strategy}
Let $d, n \ge 2$ be integers and  look at the local ring 
$$A = R/[(X_1^n, X_2^n, \ldots, X_d^n) \cap (Y_1, Y_2, \ldots, Y_d)],$$
where $R=k[[X_1, X_2, \ldots, X_d, Y_1, Y_2, \ldots, Y_d]]$ is the formal power series ring over a field $k$. Then $\dim A = d$, $\depth A = 1$, and $A$ is a generalized Cohen-Macaulay ring with $\rmH_\m^i(A) = (0)$ for all $i \ne 1, d$. Let $x_i$ and $y_j$ denote respectively  the images of $X_i$ and $Y_j$ in $A$. Hence $\rmH_\m^1(A) \cong A/\c,$ where $\c = (x_1^n, x_2^n, \ldots, x_d^n)+(y_1, y_2, \ldots, y_d)$. We put  $Q = (x_i^n -y_i \mid 1 \le i \le d)$. Then the parameter ideal $Q$ in $A$ is standard by \cite[Corollary 3.7]{T}, since $Q\rmH_\m^1(A) = (0)$. We have $\c = Q + (x_1^n, x_2^n, \ldots, x_d^n) = Q + (y_1, y_2, \ldots, y_d)$, so that $c^2 = Q\c + (x_1^n, x_2^n, \ldots, x_d^n)(y_1, y_2, \ldots, y_d) = Q\c$; hence $Q$ is a minimal reduction of $\c$. Because $x_1x_2^{n-1} \in \m^n \setminus \c$ (recall  that $n \ge 2$), we have $\c \subsetneq \cl{\c} = \m^n + (y_1, y_2, \ldots, y_d).$ Hence $\cl{Q} = \cl{\c}$ is not standard by \cite[Corollary 3.7]{T}.  Thus Theorem \ref{thm4.5} guarantees there exists at least one minimal reduction $Q'$ of $\cl{Q}$ such that $$0 > \e_{Q'}^1(A) > \e_Q^1(A) = -n^d,$$
provided the field $k$ is infinite. 
\end{strategy}

We  precisely explore the case where $d =2$. Let $I$ be an $\m$--primary ideal in $A$ and suppose that $Q$ is a reduction of $I$. We put $$\rmS_Q(I) = I\calR (I)/I\calR (Q) \cong \bigoplus_{n \ge 1}I^{n+1}/Q^nI$$ and call it the Sally module of $I$ with respect to $Q$ (\cite{V1}). Sally modules play a very important role in the analysis of ideals $I$ with interaction to their reductions $Q$  (see, e.g., \cite{GNO1, GNO2, GO2}), as we will also  see in the following.

\begin{ex}\label{ex4.6}
Let $R=k[[X, Y, Z, W]]$ be the formal power series ring over a field $k$ and let $\a = (X^n,Y^n) \cap (Z,W)$
with  $n \ge 2$. We look at the local ring $A = R/\a$. Then $\operatorname{dim} A = 2$, $\operatorname{depth} A = 1$, and
$$\H_\m^1(A) \cong A/(x^n, y^n,z, w),$$ where $\m$ denotes the maximal ideal of $A$ and $x, y, z$, and $w$ denote the images of $X, Y, Z$, and $W$ in $A$ respectively. We put  $Q = (x^n - z, y^n -w)$ and $Q' =(xy^{n-1} - z, x^n + y^n -w)$. Then we have the following, where $\frak{c} = (x^n, y^n, z, w)$.
\begin{enumerate}
\item[$(1)$] $\overline{Q} = \overline{Q'} =\overline{\frak{c}}= \m^n + (z,w)$.
\item[$(2)$] $\e_Q^0(A) = \e_{Q'}^0(A) = 2n^2$.
\item[$(3)$] $0 > \e_{Q'}^1(A) = -n^2 + n - 1 > \e_Q^1(A) = -n^2$. Hence $\sharp \Lambda (\c) > 1$.
\item[$(4)$] $\ell_A(A/Q^{\ell +1}) = 2n^2\binom{\ell + 2}{2} + n^2\binom{\ell + 1}{1}$ and  $\ell_A(A/{Q'}^{\ell +1}) = 2n^2\binom{\ell + 2}{2} + (n^2 - n +1)\binom{\ell + 1}{1}$ for all integers $\ell \ge 0$.
\item[$(5)$] The element $x^n + y^n -w$ is not superficial for $A$ with respect to  $Q'$, so that  the scheme $\operatorname{Proj} \mathcal R (Q')$ is not locally Cohen-Macaulay.
\item[$(6)$] Let $S=\mathrm{S}_Q(I)$ (resp. $S'=\mathrm{S}_{Q'}(I)$) denote the Sally module of $I=\overline{Q} = \overline{Q'}$ with respect to $Q$ (resp. $Q'$) and let $T = \mathcal{R} (Q)$ (resp. $T' = \mathcal{R} (Q')$) be the Rees algebra of $Q$ (resp. $Q'$). Let  $\p = \m T$ and $\p' = \m T'$. Then $$\ell_{{T_\p}}(S_\p) = \ell_{T'_{\p'}}(S'_{\p'}) + (n-1).$$
\end{enumerate}
\end{ex}

\begin{proof}
Recall that  $\overline{Q} = \overline{\c}$, since $\c^2 = Q\c$. We have $\overline{\m^n + (z,w)} = \m^n + (z,w)$, since the ideal $\m^n\cl{A}$ is integrally closed in the regular local ring $\cl{A} = A/(z,w)$. Therefore $\overline{Q} = \m^n + (z,w) = \overline{\c}$, because 
$$Q \subseteq \m^n +(z,w) = (x,y)^n + (z,w) \subseteq \overline{(x^n, y^n) + (z,w)} = \overline{\c}.$$ 
Hence  $\c \ne \overline{\c}$ and $Q' \not\subseteq \c$, because $xy^{n-1} \in \cl{\c} \setminus \c$ (recall that $n \ge 2$). Let $\p_1 = (x,y)$ and $\p_2 = (z,w)$. Then
$\operatorname{Ass} A = \{\p_1, \p_2\}$
 and, thanks to the associative formula of multiplicity, we get the equality
$$\e_\q^0(A) = \sum_{\p \in \operatorname{Ass} A}\ell_{A_\p}(A_\p)\e_{\q{\cdot}(A/\p)}^0(A/\p)$$
for every $\m$-primary ideal $\q$ in $A$. Applying this observation to the ideals $Q$ and $Q'$, we readily get that $$\e_Q^0(A) = \e_{Q'}^0(A) = 2n^2.$$
 Hence $Q'$ is also a reduction of $\overline{\c}$ by a theorem of D. Rees (\cite{R}), because $Q' \subseteq \overline{\c}$ and $\e_{\overline{\c}}^0(A)=\e_{Q'}^0(A)$. Thus  $\overline{Q} = \overline{Q'}$ but $Q' \not\subseteq \c$.
We put $C = A/\c$; hence $C \cong \H_\m^1(A)$. Therefore
$$\e_{Q'}^1(A) = -n^2 + n-1$$
by Proposition \ref{GhHV} (3), because $(x^n + y^n -w)C=(0)$ and because 
\begin{eqnarray*}
\ell_A((0):_C(xy^{n-1}-z)) &=& \ell_A(C/(xy^{n-1}-z)C)\\
& = & \ell_A(A/(x^n, y^n, xy^{n-1}, z, w))\\
& = &n^2 - n + 1.
\end{eqnarray*}
By  Proposition \ref{GhHV} (1) we also have for all integers $\ell \ge 0$
\begin{eqnarray*}
\ell_A(A/{Q'}^{\ell + 1}) &=& 2n^2\binom{\ell + 2}{2} + \ell_A((0):_C(xy^{n-1}-z)) \binom{\ell + 1}{1}\\
&=& 2n^2\binom{\ell + 2}{2} + (n^2 - n+1)\binom{\ell + 1}{1}.
\end{eqnarray*}

Similarly, by Proposition \ref{GhHV} (4) we have $$\e_Q^1(A) = -\ell_A(C) = -n^2,$$
 because $QC = (0)$, so that  by Proposition \ref{GhHV} (1) $$\ell_A(A/Q^{n+1}) = 2n^2\binom{\ell + 2}{2} + n^2\binom{\ell + 1}{1}$$ for all $\ell \ge 0$.

If $x^n + y^n -w$ were superficial for $A$ with respect to  $Q'$, by \cite[Lemma 2.4 (1)]{GNi} we must have that 
\begin{eqnarray*}
\e_{Q'}^1(A) &=& \e_{Q'/(x^n + y^n -w)}^1(A/(x^n +y^n -w))\\
&=& \ell_A((0):_{C}(x^n + y^n -w))\\ 
&=& -\ell_A(C)\\
&=& -n^2,
\end{eqnarray*}
since $x^n + y^n -w$ is $A$--regular. This is impossible, since $\e_{Q'}^1(A)=-n^2 + n -1$ and $n \ge 2$. Hence  by Proposition \ref{3.3} the scheme $\operatorname{Proj} \calR (Q')$ cannot be locally Cohen-Macaulay.

To see assertion (6), notice that by \cite[Proposition 2.5]{GO1} we get the equalities
\begin{eqnarray*}
\e_I^1(A) &=& \e_I^0(A) + \e_Q^1(A) -\ell_A(A/I) + \ell_{T_\p}(S_\p)\\
&=& \e_I^0(A) + \e_{Q'}^1(A) -\ell_A(A/I) + \ell_{{T'}_\p'}({S'}_{\p'})
\end{eqnarray*}
 for the ideal $I = \overline{Q} = \cl{Q'}$, because by Proposition  \ref{4.7} conditions $\mathrm{(C_0)}$ and  $\mathrm{(C_2)}$ required in \cite{GO1} are satisfied for the ideals $Q, Q'$, and $I$.  
\end{proof}

Let us add one more fact about Example \ref{ex4.6}, which shows, similarly as Theorem \ref{3} and Example \ref{3.5}, in Example \ref{ex4.6} the value $\e_\q^1(A)$ is constant for every minimal reduction $\q$ of $\m$. From these results one might expect  an affirmative answer to Problem \ref{ques} in the case where $I=\m $. This is, however, not the case, as we will show in Section 6.

\begin{rem}\label{rank} In Example \ref{ex4.6}  assume that  the  field  $k$ is infinite. Then $\e_\q^1(A) = -n$ for every minimal reduction $\q$ of the maximal ideal $\m$ in $A$. Hence $\Lambda (\m) = \{-n \}$.
\end{rem}

\begin{proof}
We maintain the same notation as Example \ref{ex4.6}. 
Let $\overline{A} = A/(z,w)$ and let $\overline{f}$ denote, for each $f \in A$,  the image of $f$ in $\overline{A}$. Then $\overline{A}$ is a two-dimensional regular local ring with $\overline{x},\overline{y}$ a regular system of parameters. Let $\q = (a,b)$ be a minimal reduction of $\m$. Then $\m = \q + (z,w)$, since $\q \cl{A}$ is a reduction of the maximal ideal in $\cl{A}$. We may assume that $a$ is superficial for $A$ with respect to  $\q$. Hence $$\e_\q^1(A) = \e_{\q/(a)}^1(A/(a)) = -\ell_A(\rmH_\m^0(A/(a))).$$  Therefore, because $\rmH_\m^0(A/(a)) \cong (0):_Ca$ where $C = A/(x^n, y^n,z, w)$, we get
\begin{eqnarray*}
\e_\q^1(A) &=& \e_{\q/(a)}^1(A/(a))\\
&=& -\ell_A((0) :_C a)\\
&=& -\ell_A(C/aC)\\
&=& -\ell_A(A/(x^n, y^n, z, w, a)).
\end{eqnarray*}  Let us  check that $\ell_A(A/(x^n, y^n, z, w, a)) = n$. We write $\overline{a} = \alpha \overline{x} + \beta \overline{y}$ with $\alpha, \beta \in \overline{A}$. We may assume that  $\alpha$ is a unit of $\overline{A}$, because $a \not\in \m^2 + (z,w)$. Therefore $$(\overline{x}^n, \overline{y}^n, \overline{a}) = (\overline{x}^n, \overline{y}^n, \overline{x} + \beta' \overline{y})= (\overline{y}^n, \overline{x} + \beta' \overline{y})$$
with $\beta' = \alpha^{-1}\beta$. Thus  $$\ell_A(A/(x^n, y^n, z, w, a)) = \ell_A(\overline{A}/(\overline{x} + \beta' \overline{y}, \overline{y}^n)) = n.$$ Hence $\e_\q^1(A) = -n$ as claimed.
\end{proof}

Before closing this section, let us note the following, which shows a strange phenomenon that the rank of  Sally modules depends on the choice of minimal reductions.

\begin{ex}\label{sally}
Suppose that $n = 2$ in Example \ref{ex4.6} and put  $I = \m^2 + (z,w)$. Let $Q$ and $Q'$ be the same as in Example \ref{ex4.6}, whence $I = \cl{Q} = \cl{Q'}$.  We denote by $S=\mathrm{S}_Q(I)$ (resp. $S'=\mathrm{S}_{Q'}(I)$)  the Sally module of $I$ with respect to $Q$ (resp. $Q'$). Let $T=\mathcal{R} (Q) = A[Qt]$ (resp. $T' =\mathcal{R} (Q')=A[Q't]$) where $t$ is an indeterminate over $A$. We put $B = T/\m T$ and $B' = T'/\m T'$. We then have the following. 
\begin{enumerate}
\item[$(1)$] $S \cong B_+$ as graded $T$-modules.
\item[$(2)$] $S' \cong B'/(x^2 + y^2 -w)t{\cdot}B'$ as graded $T'$-modules.
\item[$(3)$] $\ell_A(A/I^{n+1}) = 8\binom{n+2}{2} -2\binom{n+1}{1} -4$ for all $n \ge 1$.
\end{enumerate}
Therefore  $\operatorname{rank}_BS = 1$ but $\operatorname{rank}_{B'}S' =0$.
\end{ex}

\begin{proof} (1) Let  $a = x^2 -z$ and $b = y^2 - w$. It is standard to check that $$I^2 = QI + (xyz, xyw), ~xyz \not\in Q,  ~I^3 = QI^2, ~\text{and}~\m I^2 \subseteq QI.$$ Hence $S \ne (0)$ and $\m S= (0)$, because  $S = TS_1$  and $S_1\cong I^2/QI$ (\cite[Lemma 2.1]{GO1}) where $S_1$ stands for the homogeneous component of $S$ with  degree $1$.  Therefore we have an epimorphism $$\varphi : B(-1) \to S$$ of graded $B$-modules defined by $$\varphi (\mathbf{e}_1) = \widetilde{xyzt} \ \  \text{and} \ \  \varphi (\mathbf{e}_2) = \widetilde{xywt},$$ where $\widetilde{xyzt}$ and $\widetilde{xywt}$ denote the images of $xyzt$ and $xywt$ in $S$ respectively and  $\{\mathbf{e}_1, \mathbf{e}_2 \}$ is the standard basis of $B(-1)^2$. Let $\overline{f}$ denote, for each $f \in T$, the image of $f$ in $B$. Then, since
$$b(xyz) = a(xyw) = -xyzw,$$
we see $
\overline{bt}\mathbf{e}_1 - \overline{at}\mathbf{e}_2 \in \Ker~\varphi
$. Therefore the homomorphism $\varphi$ gives rise to an epimorphism $$\overline{\varphi} :  B_+ \to S$$ of graded $B$--modules  (notice that $B = k[\overline{at}, \overline{bt}]$  and  $B_+ \cong B(-1)^2/B{\cdot}[\overline{bt}\mathbf{e}_1-\overline{at}\mathbf{e}_2]$, since $\overline{at}, \overline{bt}$ are algebraically independent over the residue class field $k = A/\m$ of $A$), which actually is an isomorphism, because $S \ne (0)$ and because by \cite[Lemma 2.3]{GO1} $S$ is a torsionfree $B$-module (recall that conditions $\mathrm{(C_0)}$ and $\mathrm{(C_2)}$ in \cite{GO1} are satisfied by  Proposition \ref{4.7}). Thus $S \cong B_+$ as graded $B$-modules. Condition $\mathrm{(C_1)}$ in \cite{GO1} is also satisfied, since $QC = (0)$  (\cite[Theorem 2.5]{T}).
Therefore by \cite[Theorem 1.3 (iii)]{GO1} we get
\begin{eqnarray*}
\ell_A(A/I^{n+1}) &=&\e_I^0(A)\binom{n+2}{2} -\left\{\e_I^0(A) + \e_Q^1(A) - \ell_A(A/I)+1\right\}\binom{n+1}{1} \\
&+& \left\{\e_Q^1(A) + \e_Q^2(A)\right\}\\
&=& 8\binom{n+2}{2}-2\binom{n+1}{1}-4
\end{eqnarray*}
for all $n \ge 1$.

(2) This time, we have $$I^2= Q'I + (xyz), ~I^3 = Q'I^2, ~\text{and}~ \m I^2 \subseteq Q'I.$$ Notice that $S' \ne (0)$, since $xyz \not\in Q'$.  Let $a' = z -xy$ and $b' = w - (x^2 + y^2)$.  We then  have $$b'(xyz) = a'(xyw) = xyzw$$ and $xyw = b'z - a'w \in {Q'}I$. Hence $b'(xyz) \in Q'^2I,$ so that we get  an epimorphism $${\varphi}' : (B'/b't{\cdot}B')(-1) \to S'$$ of graded $B'$--modules such that  ${\varphi}'(1) = \widetilde{xyzt}$, where $ \widetilde{xyzt}$ denotes the image of  $xyzt$ in $B'$.

We want to  show that ${\varphi}'$ is an isomorphism. Let $\overline{f}$ denote, for each $f \in T'$, the image of $f$ in $B'$. Suppose that $\Ker \ {\varphi}' \ne (0)$. Then the homogeneous component $[\Ker {\varphi}']_n$ of $\Ker \ {\varphi}'$ is non-zero for some integer $n$. Choose such an integer $n$ as small as possible. Then $n \ge 2$, since $S' \ne (0)$ and $\overline{a't}^{n-1} \in \Ker \ {\varphi}'$, since $B'=k[\overline{a't}, \overline{b't}]$. Therefore $${a'}^{n-1}(xyz) \in {Q'}^nI = a'{Q'}^{n-1}I + {b'}^nI.$$ We write  ${a'}^{n-1}(xyz) = a'i + {b'}^nj$ with $i \in {Q'}^{n-1}I$ and $j \in I$. We then have $$j \in (a') :{b'}^n = (a') :b',$$ since $a',b'$ is a $d$-sequence by Proposition \ref{4.7} (1). Let $b'j = a'h$ with $h \in A$. Then $h \in (b') : a' \subseteq I$ by Proposition \ref{4.7} (2) and ${a'}^{n-1}(xyz) = {a'}i + a'({b'}^{n-1}h),$ so that  $${a'}^{n-2}(xyz) = i + {b'}^{n-1}h \in {Q'}^{n-1}I,$$ because $a'$ is $A$-regular. Therefore  $$\overline{a't}^{n-2} \in [\Ker \ {\varphi}']_{n-1},$$ which contradicts the minimality of $n$. Hence $\varphi'$ is an isomorphism and so, $S' \cong B'/\overline{b't}{\cdot}B'$ as  graded $B'$-modules.
\end{proof}

\section{A counterexample in the case where $I = \m$}
Even if $I= \m$, the set $\Lambda (I)$ is not necessarily a singleton. To show this, we need a  technique of reducing problems to the case where $I = \m$, which we shall briefly explain in this section.

Let $B$ be a Noetherian local ring with maximal ideal $\n$ and assume that $B$ contains a field $k$ such that the composite map $$k \overset{\iota}{\to} B \overset{\varepsilon}{\to} B/\n$$
is bijective, where $\iota : k \to B$ denotes  the embedding  and $\varepsilon : B \to B/\n$ denotes the canonical epimorphism. Let us fix an  $\n$-primary ideal $J$ in $B$ and put $A = k + J$. Then $A$ is a $k$-subalgebra of $B$ and $B$ is a module-finite extension of $A$, because $$\ell_A(B/A) = \ell_A(B/J) - \ell_A(A/J) = \ell_B(B/J) -1.$$ Hence $A$ is a Noetherian local ring with maximal ideal $\m = J$ and $\operatorname{dim} A = \operatorname{dim} B$, thanks to Eakin-Nagata's theorem (\cite[Theorem 3.7]{M}).

Suppose now that $d = \operatorname{dim} B > 0$. Let $\q =(a_1, a_2, \ldots, a_d)B$ be a parameter ideal in $B$ and assume that $\q$ is a reduction of $J$. We put $Q = (a_1, a_2, \ldots, a_d)A.$
Then $Q$ is a reduction of $\m$; hence $Q$ is a parameter ideal in $A$. We have the canonical isomorphism between the Sally module $\mathrm{S}_Q(\m) = \bigoplus_{n \ge 1}\m^{n+1}/Q^{n}\m$ of $\m$ with respect to $Q$ and  the Sally module $\mathrm{S}_\q (J)=\bigoplus_{n \ge 1}J^{n+1}/\q^nJ$ of $J$ with respect to $\q$, because $$\m^{n+1}/Q^{n}\m= J^{n+1}/\q^nJ$$ for all $n \ge 1$. Consequently we get the following.

\begin{fact}\label{fact}
 $\mathrm{S}_Q(\m) \cong \mathrm{S}_\q(J)$ as graded $\mathcal{R} (Q)$-modules.
\end{fact}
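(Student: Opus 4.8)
The plan is to identify the Sally modules degree by degree, since both are already defined as direct sums of homogeneous pieces. First I would record the ring-theoretic setup: because the composite $k\to B\to B/\n$ is bijective, we have $B=k\oplus\n$ as $k$-vector spaces, and since $A=k+J$ with $J$ an $\n$-primary ideal, the maximal ideal of $A$ is exactly $\m=J$ (this is where Eakin--Nagata enters, giving that $A$ is Noetherian local of the same dimension as $B$). The crucial elementary observation is that powers of $\m$ inside $A$ coincide with powers of $J$ inside $B$: indeed $\m^n=J^n$ for every $n\ge 1$, because $J\subseteq \n$ forces $J^n\subseteq \n$, and multiplying elements of $J$ in $A$ versus in $B$ gives the same result (the multiplication is inherited from $B$). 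Similarly, for the parameter ideal, $Q=(a_1,\dots,a_d)A\subseteq J$ so $Q^n\subseteq J^n$, and one checks $Q^n\m=Q^n J=\q^n J$ as subsets of $B$: the key point is that $\q^n J=(a_1,\dots,a_d)^n J$ is generated over $B$ by products $a_{i_1}\cdots a_{i_n}\,\xi$ with $\xi\in J$, but each such product already lies in $A$ (it is in $J^{n+1}\subseteq J=\m\subseteq A$), and the $B$-span of a subset of $A$ that is an $A$-submodule equals its $A$-span here because $J^{n+1}\subseteq A$ absorbs multiplication by $B$ only through $J$-coefficients — more carefully, $\q^nJ$ as a $B$-ideal equals $Q^n\m$ as an $A$-module since both are the set of finite sums $\sum a_{i_1}\cdots a_{i_n}\xi_j$ with $\xi_j\in J$.

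Second I would assemble these identities into the statement about Sally modules. By definition $\mathrm{S}_Q(\m)=\bigoplus_{n\ge 1}\m^{n+1}/Q^n\m$ and $\mathrm{S}_\q(J)=\bigoplus_{n\ge 1}J^{n+1}/\q^nJ$. Using $\m^{n+1}=J^{n+1}$ and $Q^n\m=\q^nJ$ from the previous step, the $n$-th homogeneous components are literally equal as sets (in fact as subquotients of $B$), hence the natural map is an isomorphism of abelian groups in each degree. It remains to check compatibility with the graded module structure over $\mathcal{R}(Q)=A[Qt]$: an element $a_it\in\mathcal{R}(Q)_1$ acts on a class in degree $n$ by multiplication by $a_i$, and since $a_i\in A\subseteq B$ this is the same as the action of $a_it\in\mathcal{R}(\q)_1$, so the degreewise isomorphisms patch to a graded isomorphism $\mathrm{S}_Q(\m)\cong\mathrm{S}_\q(J)$. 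This also uses that $\mathcal{R}(Q)$ is a subalgebra of $\mathcal{R}(\q)=B[\q t]$, so the $\mathcal{R}(Q)$-module structure on $\mathrm{S}_\q(J)$ is just the restriction of its natural $\mathcal{R}(\q)$-structure.

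The main obstacle, such as it is, is the careful verification of the identity $Q^n\m=\q^nJ$ — that forming the ideal power inside the larger ring $B$ and then multiplying by $J$ does not produce anything outside the $A$-submodule $Q^n\m$. The point to be careful about is that $\q^n$ itself (as a $B$-ideal) is strictly larger than $Q^n$ (as an $A$-ideal) in general, since $\q^n$ contains all $B$-linear combinations of the monomials $a_{i_1}\cdots a_{i_n}$; but once we multiply by $J$, every generator $b\cdot a_{i_1}\cdots a_{i_n}\cdot\xi$ with $b\in B$, $\xi\in J$ can be rewritten, since $a_{i_1}\cdots a_{i_n}\xi\in J^{n+1}\subseteq J$ and $bJ\subseteq$ ... this is exactly where one must be slightly attentive: $bJ$ need not lie in $A$, but $b\xi\in B$ and $b\xi$ times a product of $n$ of the $a_i$ lies in $\q^n\cdot\n\subseteq \q^n\cap$ (higher power), and a short argument with $J^{n+1}\subseteq\m$ and the fact that $J$ is an ideal of $B$ closes the gap. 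Everything else is bookkeeping, so I expect the whole proof to be two or three short paragraphs once this identity is nailed down.
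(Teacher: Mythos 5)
Your proposal is correct and takes essentially the same route as the paper, which justifies the isomorphism by the degreewise equalities $\m^{n+1}=J^{n+1}$ and $Q^n\m=\q^nJ$; your key observation that a $B$-coefficient $b$ can be absorbed into the factor from $J$ (since $J$ is a $B$-ideal, $b\xi\in J$) is exactly what makes these identities, and hence the graded identification, work.
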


We put $S = \mathrm{S}_Q(\m)$, $T = \mathcal{R} (Q)$, and $\p = \m T$. Then, thanks to \cite[Remark 2.6]{GO1}, we get 
$$\e_Q^1(A) + \ell_{T_\p}(S_\p) = \e_\m^1(A) -\e_\m^0(A) + 1.$$
Hence the sum $\e_Q^1(A) + \ell_{T_\p}(S_\p)$ is independent of the choice of reductions $\q =(a_1, a_2, \ldots, a_d)B$ of $J$, so that we have the following.

\begin{prop}\label{sum}
Let $\q=(a_1, a_2, \ldots, a_d)B$ and $\q' = (a_1', a_2', \ldots, a_d')B$ be parameter ideals of $B$ and assume that $\q$ and $\q'$ are reductions of $J$. We put  $Q = (a_1, a_2, \ldots, a_d)A$ and $Q' = (a_1', a_2', \ldots, a_d')A$. Then one has 
$$\e_Q^1(A) + \ell_{T_\p}(S_\p) = \e_{Q'}^1(A) + \ell_{{T'}_{\p'}}({S'}_{\p'})=\e_\m^1(A) -\e_\m^0(A) + 1,$$ where $S = \mathrm{S}_Q(\m)$, $S' = \mathrm{S}_{Q'}(\m)$, $T = \mathcal{R} (Q)$, $T' = \mathcal{R} (Q')$, $\p = \m T$, and $\p' = \m T'$.
\end{prop}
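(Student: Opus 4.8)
The plan is to read off Proposition \ref{sum} directly from the displayed identity established immediately above it, applied to both reductions. The engine is Fact \ref{fact} together with \cite[Remark 2.6]{GO1}: for any parameter ideal $\q=(a_1,a_2,\ldots,a_d)B$ of $B$ that is a reduction of $J$, setting $Q=(a_1,a_2,\ldots,a_d)A$, $T=\mathcal{R}(Q)$, $S=\mathrm{S}_Q(\m)$, and $\p=\m T$, one has
$$\e_Q^1(A) + \ell_{T_\p}(S_\p) = \e_\m^1(A) - \e_\m^0(A) + 1.$$
The point to exploit is that the right-hand side involves only the invariants $\e_\m^0(A)$ and $\e_\m^1(A)$ attached to $A$ and its maximal ideal $\m=J$, so it is blind to the chosen reduction.

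First I would check that all the hypotheses propagate to the second reduction. Since $\q'=(a_1',a_2',\ldots,a_d')B$ is again a parameter ideal of $B$ which is a reduction of $J$, the setup recorded before Fact \ref{fact} applies verbatim: by Eakin--Nagata (\cite[Theorem 3.7]{M}) the ring $A=k+J$ is Noetherian local with $\m=J$ and $\dim A=\dim B$; the ideal $Q'=(a_1',a_2',\ldots,a_d')A$ is a reduction of $\m$, hence a parameter ideal in $A$; and Fact \ref{fact} gives $\mathrm{S}_{Q'}(\m)\cong \mathrm{S}_{\q'}(J)$ as graded $\mathcal{R}(Q')$-modules. In particular $\ell_{{T'}_{\p'}}({S'}_{\p'})$ is defined and the displayed identity holds for the primed data as well, so that $\e_{Q'}^1(A) + \ell_{{T'}_{\p'}}({S'}_{\p'}) = \e_\m^1(A) - \e_\m^0(A) + 1$.

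Comparing the two instances of the identity then gives
$$\e_Q^1(A) + \ell_{T_\p}(S_\p) = \e_{Q'}^1(A) + \ell_{{T'}_{\p'}}({S'}_{\p'}) = \e_\m^1(A) - \e_\m^0(A) + 1,$$
which is exactly the assertion. There is essentially no obstacle: the substantive content has already been isolated in Fact \ref{fact} and \cite[Remark 2.6]{GO1}, and the comparison is purely formal. The only point deserving a word of care is that invoking \cite{GO1} and the finiteness of $\ell_{T_\p}(S_\p)$ requires nothing beyond $Q$ (respectively $Q'$) being a parameter-ideal reduction of $\m$ in the local ring $A$; this is symmetric in the two reductions, so if anything is the ``hard part'' it is merely verifying once that the hypotheses under which \cite[Remark 2.6]{GO1} is stated are met in the $k+J$ construction, and that verification is independent of which reduction one starts with and so costs nothing extra for the comparison.
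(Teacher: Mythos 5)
Your proposal matches the paper's own argument exactly: the paper derives the identity $\e_Q^1(A) + \ell_{T_\p}(S_\p) = \e_\m^1(A) -\e_\m^0(A) + 1$ from Fact \ref{fact} and \cite[Remark 2.6]{GO1} in the paragraph preceding the proposition, and then concludes by observing that the right-hand side is independent of the chosen reduction. Your only addition is the explicit remark that the hypotheses propagate symmetrically to the primed reduction, which the paper leaves implicit; the proof is correct.
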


Let us note one example. 
The following example is based on Example \ref{ex4.6} and  shows that $\e_Q^1(A)$ depends on the choice of minimal reductions $Q$ even if  $\overline{Q} = \m$. It eventually shows that the rank, or  multiplicity, of Sally modules of the maximal ideal $\m$ depends on the choice of minimal reductions $Q$ as well.

\begin{ex}
Let $R=k[[X,Y,Z,W]]$ be the formal power series ring over a field $k$ and put $$B = R/[(X^2, Y^2) \cap (Z,W)].$$ Let  $J = (x,y)^2 + (z,w)$, where $x,y,z$ and $w$ denote the images of $X, Y, Z$, and $W$ in $B$ respectively. We look at the ring  $A = k + J$. Then $A$ is a Noetherian local ring with maximal ideal $\m = J$ and $B$ is a module-finite extension of $A$. Let $$Q = (x^2-z,y^2-w)A\ \ \text{and}\ \  Q' = (xy - z, x^2 + y^2 -w)A.$$  Then $Q$ and $Q'$ are minimal reductions of $\m$ such that
$$\e_{Q'}^1(A) = \e_{Q}^1(A) + 1 = -5, \ \ \ell_{T_\p}(S_\p) = 1, \  \ \operatorname{and}\ \ \ell_{{T'}_{\p'}}({S'}_{\p'}) = 0,$$ where $S = \mathrm{S}_Q(\m)$, $S' = \mathrm{S}_{Q'}(\m)$, $T = \mathcal{R} (Q)$, $T' = \mathcal{R} (Q')$, $\p = \m T$, and $\p' = \m T'$. Hence $\sharp \Lambda (\m) > 1$.
\end{ex}

\begin{proof}
Since $
\ell_A(A/\m^{n+1}) = \ell_A(B/J^{n+1}) - \ell_A(B/A)$ and $\ell_A(B/A) = \ell_B(B/J) - 1$,  by Example  \ref{sally} (3) we have
$$\ell_A(A/\m^{n+1}) = 8\binom{n+2}{2} - 2\binom{n+1}{1} - 6$$ for all $n \ge 1$, so that  $$\e_\m^0(A) = 8, \ \ \e_\m^1(A) = 2, \ \ \operatorname{and} \ \ \e_\m^2(A) = -6.$$ Therefore
$$
\e_Q^1(A) + \ell_{T_\p}(S_\p) = \e_{Q'}^1(A) + \ell_{{T'}_{\p'}}({S'}_{\p'}) =\e_\m^1(A) -\e_\m^0(A) +1 =-5
$$ by Proposition \ref{sum}.   On the other hand, thanks to Fact  \ref{fact} and Example \ref{sally} (1) (2), we see that $\ell_{T_\p}(S_\p)=1$ and $\ell_{{T'}_{\p'}}({S'}_{\p'}) =0$. Thus   $\e_{Q'}^1(A) = \e_{Q}^1(A) + 1 = -5$. 
\end{proof}

\vspace{1cm}


\end{document}